\documentclass[11pt,letterpaper]{amsart}
\usepackage{geometry}
\geometry{letterpaper,margin=1in}

\usepackage[authoryear,sort&compress]{natbib}
\bibpunct{[}{]}{;}{n}{,}{,}

\usepackage{amsmath}
\usepackage{amsfonts}
\usepackage{amssymb}
\usepackage{comment}
\usepackage[shortlabels]{enumitem}
\usepackage{mathrsfs}
\usepackage{amsthm}
\usepackage{tikz}
    \usetikzlibrary{matrix}
    \usetikzlibrary{decorations.markings}
\usepackage{hyperref}
\usepackage[T1]{fontenc}
\usepackage{oldgerm}
\usepackage{scalerel,stackengine}
\usepackage{stmaryrd}
\usepackage{xcolor}
\usepackage{xfrac}
\usepackage{MnSymbol}

\newcommand{\ad}{\textrm{\normalfont ad}}
\newcommand{\Ad}{\textrm{\normalfont Ad}}


\newtheorem{theorem}{Theorem}[section]
\newtheorem{definition}{Definition}[section]
\newtheorem{example}{Example}[section]
\newtheorem{proposition}{Proposition}[section]
\newtheorem{lemma}{Lemma}[section]
\newtheorem{corollary}{Corollary}[section]
\newtheorem{remark}{Remark}[section]

\begin{document}

\title[]{Higher order jet bundles of Lie group-valued functions}

\author{Marco Castrillón López}
\address{Facultad de Ciencias Matemáticas, Universidad Complutense de Madrid, Plaza de las Ciencias, 3, Madrid, 28040, Madrid, Spain.}
\email{mcastri@mat.ucm.es}

\author{\'Alvaro Rodr\'iguez Abella}
\address{Instituto de Ciencias Matem\'aticas (CSIC-UAM-UC3M-UCM), Calle Nicol\'as Cabrera, 13-15, Madrid, 28049, Madrid, Spain.}
\email{alvrod06@ucm.es}


\begin{abstract}
For each positive integer $k$, the bundle of $k$-jets of functions from a smooth manifold, $X$, to a Lie group, $G$, is denoted by $J^k(X,G)$ and it is canonically endowed with a Lie groupoid structure over $X$. In this work, we utilize a linear connection to trivialize this bundle, i.e., to build an injective bundle morphism from $J^k(X,G)$ into a vector bundle over $G$. Afterwards, we give the explicit expression of the groupoid multiplication on the trivialized space, as well as the formula for the inverse element. In the last section, a coordinated chart on $X$ is considered and the local expression of the trivialization is computed.
\end{abstract}

\keywords{fiber bundle, Lie groupoid, jet bundle, partition, tensor product} 
\subjclass{22E30, 58A20, 22E60}

\maketitle


\section{Introduction}

For each $k\in\mathbb Z^+$, the family of $k$-jets of smooth curves, $c:\mathbb R\to G$, where $G$ is a Lie group, is naturally a Lie group fiber bundle over $\mathbb R$. We denote it by $J^k(\mathbb R,G)$, and its fiber over each $t\in\mathbb R$ by $J_t^k(\mathbb R,G)$. In fact, these spaces are frequently known as higher-order tangent spaces and denoted by $T^kG$. In \cite{Vi2013}, the Lie group structure of $J_0^k(\mathbb R,G)$ was investigated. Namely, the space was trivialized by the right, yielding an isomorphism of bundles over $G$,
\begin{equation*}
J_0^k(\mathbb R,G)\simeq G\times(\mathfrak g\oplus\overset{\underset{\smile}{k}}{\dots}\oplus\mathfrak g),
\end{equation*}
where $\mathfrak g$ is the Lie algebra of $G$. Following that, the explicit expression of the multiplication, as well as the inverse element, were studied under this identification.

The aim of this work is to generalize the results in \cite{Vi2013} when an arbitrary smooth manifold, $X$, is considered instead of $\mathbb R$. As we will see, a linear connection on the cotangent bundle of $X$ is necessary in order to trivialize the jet bundle, $J^k(X,G)$.  It is important to point out that the generalization analyzed here goes beyond a mere complications of the computations occurring in higher order tangent bundles of Lie groups. The situation for $\mathrm{dim}X\geq 2$ implies qualitative and important differences that entail the structure jet bundles in their full complexity. In particular, in this case the trivialization is not an isomorphism any more, but only an injective morphism. Notwithstanding, we take advantage of this injection to compute explicit formulas for the fibered multiplication and the inverse element.

Jets of curves on Lie groups are essential in the analysis of higher order Lagrangian mechanical systems over Lie groups \cite{EsKuSu2021}, as well as in the higher order Euler--Poincaré reduction procedure \cite{GaHoMeRaVi2012} and its applications to optimal control theory \cite{CoMa2014}. For that reason, we expect that our generalization to jets of functions defined on arbitrary manifolds will have great relevance in the field theoretical counterpart of such systems, i.e., higher order Lagrangian field theories on principal bundles and the corresponding Euler--Poincaré field equations.

The paper is organized as follows. In Section \ref{sec:prelim}, some notions about jet bundles, Lie algebras and anti-lexicographically partitions are recalled. Besides, partly ordered partitions are introduced and some properties about them are proved. Next, we present the main results of the paper: the right trivialization of $J^k(X,G)$ is defined in Section \ref{sec:trivalization}, and the fibered product, together with the formula for the inverse element, are computed under this trivialization in Section \ref{sec:structure}. Lastly, in Section \ref{sec:coordinated}, the local expression of the trivialization is given after choosing a coordinated chart of $X$. In particular, this enables us to compute the image of the injective morphism for $k=2$.

In the following, every manifold or map is assumed to be smooth, meaning $C^\infty$, unless otherwise stated. We assume that $\dim X=n$. The space of (smooth) sections of a fiber bundle, $\pi_{Y,X}:Y\to X$, is denoted by $\Gamma(\pi_{Y,X})=\Gamma(Y\to X)=\Gamma(Y)$. In particular, vector fields on a manifold $X$ are denoted by $\mathfrak X(X)=\Gamma(TX)$, where $TX$ is the tangent bundle of $X$. Likewise, $p$-forms on $X$ are denoted by $\Omega^p(X)=\Gamma(T^*X)$, where $T^*X$ is the cotangent bundle of $X$. The derivative, or tangent map, of a map $f\in C^\infty(X,X')$ between the manifolds $X$ and $X'$ is denoted by $(df)_x:T_xX\rightarrow T_{f(x)}X'$ for each $x\in X$. When working in local coordinates, we will assume the Einstein summation convention for repeated indices.

\section{Preliminaries}\label{sec:prelim}

\subsection{Jet bundles}

We summarize the notation on jet bundles that we will use in the following (for the corresponding definitions, the author can go, for example, to \cite{Sa1989}). Let $\pi_{Y,X}:Y\to X$ be a fiber bundle and $k\in\mathbb Z^+$. The $k$-th order jet bundle of $\pi_{Y,X}$ is denoted by $\pi_{J^k Y,X}:J^k Y\to X$ and its elements by $j_x^k s$. Similarly, its fibers are denoted by $J_x^k Y$, for each $x\in X$. The $k$-th jet lift of a section $s\in\Gamma(\pi_{Y,X})$ is denoted by $j^k s\in\Gamma\left(\pi_{J^k Y,X}\right)$ and it is called holonomic section. Recall that, for $0\leq l<k$, the maps $\pi_{k,l}:J^k Y\to J^l Y$, $\pi_{k,l}(j^k_x s)=j^l_xs$, are fiber bundles, where we denote $J^0 Y=Y$. In particular, the first jet bundle of $\pi_{Y,X}$ may be regarded as
\begin{equation*}
J^1 Y=\left\{\gamma_y:T_x X\to T_y Y\mid\gamma_y\text{ linear},~x=\pi_{Y,X}(y),~(d\pi_{Y,X})_y\circ\gamma_y=\textrm{\normalfont id}_{T_x X}\right\}.
\end{equation*}

On the other hand, the $k$-th order jet bundle of a trivial bundle $\pi_{X\times F,X}:X\times F\to X$ is known as the $k$-th order jet of functions from $X$ to $F$. Since any section $s\in\Gamma(\pi_{X\times F,X})$ is of the form $s=\left(\textrm{\normalfont id}_X,f\right)$ for some function $f:X\to F$, elements of $J^k(X,F)=J^k(X\times F)$ are denoted by $j_x^k f$.

\subsection{Universal enveloping algebra}

We recall the main notions about the universal enveloping algebra of a Lie algebra. For an in-depth exposition see, for example, \cite{Va1984,Ha2015}. Let $G$ be a Lie group and $\mathfrak g$ be its Lie algebra. For each $g\in G$, we denote by $R_g:G\to G$ the right multiplication by $g$, by $\Ad_g:\mathfrak g\to\mathfrak g$ the adjoint representation of $G$, and by $\ad(\xi):\mathfrak g\to\mathfrak g$ the the adjoint representation of $\mathfrak g$, where $\xi\in\mathfrak g$.

\begin{theorem}
Let $\mathfrak g$ be a Lie algebra. There exists an associative algebra with identity, which is called \emph{universal enveloping algebra} of $\mathfrak g$ and denoted by $U(\mathfrak g)$, and a linear map $\iota:\mathfrak g\to U(\mathfrak g)$ such that
\begin{enumerate}
    \item $\iota([\xi,\eta])=\iota(\xi)\iota(\eta)-\iota(\eta)\iota(\xi)$ for each $\xi,\eta\in\mathfrak g$.
    \item $U(\mathfrak g)$ is generated, as a algebra, by $\{\iota(\xi)\mid\xi\in\mathfrak g\}$.
    \item If $\mathcal A$ is an associative algebra with identity and $i:\mathfrak g\to\mathcal A$ is linear and $i([\xi,\eta])=i(\xi)i(\eta)-i(\eta)i(\xi)$ for each $\xi,\eta\in\mathfrak g$, then there exists a unique algebra homomorphism $\phi: U(\mathfrak g)\to\mathcal A$ such that the following diagram is commutative,
    \begin{equation*}
    \begin{tikzpicture}
\matrix (m) [matrix of math nodes,row sep=3em,column sep=3em,minimum width=2em]
{	\mathfrak g & \mathcal A \\
	U(\mathfrak g) & \\};
\path[-stealth]
(m-1-1) edge [] node [left] {$\iota$} (m-2-1)
(m-1-1) edge [] node [above] {$i$} (m-1-2)
(m-2-1) edge [dashed] node [below] {$\phi$} (m-1-2);
\end{tikzpicture}
    \end{equation*}
\end{enumerate}
\end{theorem}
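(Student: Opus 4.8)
The plan is to realize $U(\mathfrak g)$ as a quotient of the tensor algebra on the underlying vector space of $\mathfrak g$. First I would form the tensor algebra $T(\mathfrak g)=\bigoplus_{m\geq 0}\mathfrak g^{\otimes m}$, with $\mathfrak g^{\otimes 0}=\mathbb R$ supplying the identity; this is the free associative unital algebra generated by the vector space $\mathfrak g$. Let $q:T(\mathfrak g)\to U(\mathfrak g)$ denote the canonical projection onto the quotient $U(\mathfrak g):=T(\mathfrak g)/I$, where $I\subseteq T(\mathfrak g)$ is the two-sided ideal generated by all elements of the form $\xi\otimes\eta-\eta\otimes\xi-[\xi,\eta]$ with $\xi,\eta\in\mathfrak g$. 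Defining $\iota:\mathfrak g\to U(\mathfrak g)$ as the restriction of $q$ to the degree-one summand $\mathfrak g^{\otimes 1}\cong\mathfrak g$ produces an associative unital algebra together with a linear map, as required.

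Property (1) then follows immediately: the generator $\xi\otimes\eta-\eta\otimes\xi-[\xi,\eta]$ lies in $I$ and is therefore sent to zero by $q$, which unwinds to $\iota(\xi)\iota(\eta)-\iota(\eta)\iota(\xi)-\iota([\xi,\eta])=0$. Property (2) is equally direct: the tensor algebra is generated as an algebra by $\mathfrak g^{\otimes 1}$, and since $q$ is a surjective algebra homomorphism, the images $\iota(\xi)=q(\xi)$ generate $U(\mathfrak g)$.

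The substance of the statement is the universal property (3), and the key tool is the universal property of the tensor algebra: any linear map $i:\mathfrak g\to\mathcal A$ into a unital associative algebra extends uniquely to an algebra homomorphism $\tilde i:T(\mathfrak g)\to\mathcal A$, determined on monomials by $\tilde i(\xi_1\otimes\cdots\otimes\xi_m)=i(\xi_1)\cdots i(\xi_m)$. I would then check that $\tilde i$ annihilates $I$. On a generator one computes $\tilde i(\xi\otimes\eta-\eta\otimes\xi-[\xi,\eta])=i(\xi)i(\eta)-i(\eta)i(\xi)-i([\xi,\eta])$, which vanishes precisely by the bracket-compatibility hypothesis on $i$. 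Since $\tilde i$ is multiplicative and $I$ is the two-sided ideal generated by these elements, $\tilde i$ vanishes on all of $I$: every element of $I$ is a sum of terms $a\cdot g\cdot b$ with $g$ a generator, and $\tilde i(a\cdot g\cdot b)=\tilde i(a)\,\tilde i(g)\,\tilde i(b)=0$. Hence $\tilde i$ descends through the quotient to an algebra homomorphism $\phi:U(\mathfrak g)\to\mathcal A$ with $\phi\circ q=\tilde i$, so that $\phi\circ\iota=i$ and the diagram commutes.

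Uniqueness of $\phi$ follows from property (2): two algebra homomorphisms out of $U(\mathfrak g)$ that agree on the generating set $\iota(\mathfrak g)$ must coincide. The only point requiring genuine care — the main obstacle — is the passage from vanishing on the \emph{generators} of $I$ to vanishing on the whole ideal; this is exactly where the multiplicativity of $\tilde i$ and the fact that $I$ is a two-sided (rather than merely linear) ideal are indispensable. Everything else amounts to unwinding the two universal properties.
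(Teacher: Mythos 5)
Your proposal is correct and is the standard construction of $U(\mathfrak g)$ as the quotient of the tensor algebra $T(\mathfrak g)$ by the two-sided ideal generated by the elements $\xi\otimes\eta-\eta\otimes\xi-[\xi,\eta]$; the paper itself states this theorem as recalled background without proof, deferring to the cited references, whose argument is exactly the one you give. All three properties, including the descent of $\tilde i$ through the quotient and the uniqueness via generation, are handled correctly.
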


Universal enveloping algebras are useful in representation theory, but we do not develop this topic here.

\begin{theorem}[PBW Theorem]
Let $\{B_1,\dots,B_m\}$ be a basis of $\mathfrak g$, then
\begin{equation*}
\textrm{\normalfont span}\left\{\iota(\xi_1)^{\alpha_1}\dots\iota(\xi_m)^{\alpha_m}\mid\alpha_i\in\mathbb Z,~\alpha_i\geq 0,~1\leq i\leq m\right\},
\end{equation*}
where $\iota(\xi)^0=1$, the identity of $U(\mathfrak g)$, is a basis of $U(\mathfrak g)$ as a vector space. In particular, the map $\iota:\mathfrak g\to U(\mathfrak g)$ is injective.
\end{theorem}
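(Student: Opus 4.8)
The plan is to establish the two halves of the statement separately: that the ordered monomials $\iota(B_1)^{\alpha_1}\cdots\iota(B_m)^{\alpha_m}$ (with each $\alpha_i\geq 0$) span $U(\mathfrak g)$, and that they are linearly independent. Injectivity of $\iota$ then follows at once, since the degree-one monomials are exactly $\iota(B_1),\dots,\iota(B_m)$, so their linear independence forces any nonzero $\xi=\sum_i c_i B_i$ to satisfy $\iota(\xi)=\sum_i c_i\,\iota(B_i)\neq 0$.

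\textbf{Spanning.} By the second property of the preceding theorem, $U(\mathfrak g)$ is generated as an algebra by $\{\iota(\xi)\mid \xi\in\mathfrak g\}$, and, since $\iota$ is linear and $\{B_1,\dots,B_m\}$ is a basis, it is generated by $\iota(B_1),\dots,\iota(B_m)$. Hence every element of $U(\mathfrak g)$ is a linear combination of products $\iota(B_{i_1})\cdots\iota(B_{i_r})$. I would rewrite each such product as a combination of ordered monomials using the first property in the form
\begin{equation*}
\iota(B_j)\,\iota(B_i)=\iota(B_i)\,\iota(B_j)+\iota([B_j,B_i]),
\end{equation*}
and noting that $\iota([B_j,B_i])$ is a degree-one element, i.e.\ a linear combination of the $\iota(B_l)$. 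Swapping an out-of-order adjacent pair $\iota(B_j)\,\iota(B_i)$ with $j>i$ thus either decreases the number of inversions in a product of the same length or produces a summand of strictly smaller length. A double induction, outer on the length $r$ and inner on the number of inversions, then reduces every product to a linear combination of ordered monomials, giving the desired spanning.

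\textbf{Linear independence.} This is the hard part, and the standard device is to make the commutative polynomial algebra $P=\mathbb R[z_1,\dots,z_m]$ into a module over $U(\mathfrak g)$; its monomials $z^\alpha=z_1^{\alpha_1}\cdots z_m^{\alpha_m}$ are linearly independent by construction. Writing $\mu(\alpha)=\min\{j\mid\alpha_j>0\}$, with $\mu=+\infty$ for the constant $1$, I would define a linear map $\rho\colon\mathfrak g\to\mathrm{End}(P)$ by prescribing $\rho(B_i)(z^\alpha)=z_i\,z^\alpha$ whenever $i\leq\mu(\alpha)$, and in general $\rho(B_i)(z^\alpha)=z_i\,z^\alpha+(\text{terms of degree}\leq|\alpha|)$, the corrections being fixed by induction on $|\alpha|$ so as to force the representation identity
\begin{equation*}
\rho(\xi)\rho(\eta)-\rho(\eta)\rho(\xi)=\rho([\xi,\eta]),\qquad \xi,\eta\in\mathfrak g.
\end{equation*}
Granting such a $\rho$, the third (universal) property supplies an algebra homomorphism $\phi\colon U(\mathfrak g)\to\mathrm{End}(P)$ with $\phi\circ\iota=\rho$. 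Applying the image of an ordered monomial to $1$ and using the prescription repeatedly (each factor acts with its index below the current minimum, so no correction term ever arises) yields exactly
\begin{equation*}
\phi\big(\iota(B_1)^{\alpha_1}\cdots\iota(B_m)^{\alpha_m}\big)(1)=z^\alpha.
\end{equation*}
Since distinct $z^\alpha$ are linearly independent in $P$, any nontrivial relation among the ordered monomials in $U(\mathfrak g)$ would produce one among the $z^\alpha$, which is impossible; hence the ordered monomials are independent.

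The main obstacle is precisely the construction of $\rho$ and the verification of the representation identity. Defining $\rho(B_i)$ consistently on all monomials and checking that the corrections can be chosen to respect the bracket reduces, through the inductive step, to the Jacobi identity for $\mathfrak g$; keeping the lower-degree correction terms under control across the induction is where essentially all the work of the Poincar\'e--Birkhoff--Witt theorem is concentrated. For the full details of this verification I would follow the standard references.
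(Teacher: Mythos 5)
The paper does not prove this statement: it is quoted as the classical Poincar\'e--Birkhoff--Witt theorem, with the reader sent to \cite{Va1984,Ha2015}, so there is no in-paper argument to compare yours against. Your outline is the standard textbook proof, and the spanning half is essentially complete and correct: generation of $U(\mathfrak g)$ by the $\iota(B_i)$ follows from property (2) of the defining theorem together with linearity of $\iota$, and the double induction (outer on the length of a product, inner on the number of inversions) using $\iota(B_j)\iota(B_i)=\iota(B_i)\iota(B_j)+\iota([B_j,B_i])$ does reduce every product to a combination of ordered monomials, since the commutator term strictly drops the length and a correct swap strictly drops the inversion count.

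The gap is in the linear independence half, and you have located it yourself: everything hinges on the existence of the linear map $\rho:\mathfrak g\to\mathrm{End}(P)$ satisfying $\rho(\xi)\rho(\eta)-\rho(\eta)\rho(\xi)=\rho([\xi,\eta])$ together with the normalization $\rho(B_i)(z^\alpha)=z_iz^\alpha$ for $i\leq\mu(\alpha)$. Granting that $\rho$ exists, the rest of your argument is airtight --- the universal property gives $\phi$ with $\phi\circ\iota=\rho$, the ordered monomials applied to $1$ produce the distinct monomials $z^\alpha$ (each successive factor indeed acts with index at most the current minimal index, so no correction terms appear), and independence follows. But the construction of $\rho$, i.e.\ the consistent inductive definition of the correction terms and the verification that the bracket identity holds (which ultimately reduces to the Jacobi identity), is precisely where the entire content of the theorem is concentrated, and you defer it to the literature. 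As a self-contained proof the attempt is therefore incomplete at its only genuinely hard point; as a justification of a quoted classical result it is no worse than what the paper itself does, which is to cite standard references.
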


Thanks to the above theorem, henceforth we identify $\mathfrak g\ni\xi\equiv\iota(\xi)\in U(\mathfrak g)$ and we regard $\mathfrak g\subset U(\mathfrak g)$ as a vector subspace.

\subsection{Partly ordered and anti-lexicographically ordered partitions}

Let $j\in\mathbb Z^+$. A partition of $\{1,\dots,j\}$ of length $l\in\{1,\dots,j\}$ is a tuple $\lambda=(\lambda_1,\dots,\lambda_l)$ where $\lambda_1,\dots,\lambda_l\subset\{1,\dots,j\}$ are disjoint subsets such that $\lambda_1\cup\dots\cup\lambda_l=\{1,\dots,j\}$. We denote by $\mathcal P(j)$ the family of all partitions of $\{1,\dots,j\}$. Likewise, we denote $j_i=|\lambda_i|$, the cardinality of $\lambda_i$, $1\leq i\leq l$. Of course, $j_1+\dots+j_l=j$ for each partition $\lambda=(\lambda_1,\dots,\lambda_l)\in\mathcal P(j)$.

\begin{definition}
A partition $\lambda=(\lambda_1,\dots,\lambda_l)\in\mathcal P(j)$ is \emph{partly ordered} if $\alpha_1^i<\dots<\alpha_{j_i}^i$ for each $1\leq i\leq l$, where we denote $\lambda_i=\{\alpha_1^i,\dots,\alpha_{j_i}^i\}$.
\end{definition}

Observe that we ask for the integers to be ordered only within each subset $\lambda_i$, $1\leq i\leq l$. The set of all partly ordered partitions of $\{1,\dots,j\}$ is denoted by $\mathcal P^+(j)$. At last, we consider the family of partly ordered partitions with $\alpha_1^1=1$, i.e.,
\begin{equation*}
\mathcal P_1^+(j)=\left\{\lambda=(\lambda_1,\dots,\lambda_l)\in\mathcal P^+(j)\mid 1\in\lambda_1\right\}.
\end{equation*}

\begin{proposition}
Let $j\in\mathbb Z^+$ and $1\leq l\leq j$. Fixed $j_1,\dots,j_l\in\{1,\dots,j\}$ such that $j_1+\dots+j_l=j$, there are exactly
\begin{equation*}
c(j_1,\dots,j_l)=\binom{j-1}{j_1-1}\prod_{i=2}^l\binom{j_i+\dots+j_l}{j_i}
\end{equation*}
different partitions $\lambda=(\lambda_1,\dots,\lambda_l)\in\mathcal P_1^+(j)$ with $|\lambda_i|=j_i$, $1\leq i\leq l$. In particular, the cardinality of $\mathcal P_1^+(j)$ is 
\begin{equation*}
\left|\mathcal P_1^+(j)\right|=\sum_{l=1}^j~\sum_{j_1+\dots+j_l=j}c(j_1,\dots,j_l).
\end{equation*}
\end{proposition}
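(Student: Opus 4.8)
The plan is to count directly, by the multiplication principle, the number of ways to distribute the elements of $\{1,\dots,j\}$ among the blocks $\lambda_1,\dots,\lambda_l$ of prescribed cardinalities $j_1,\dots,j_l$, subject to $1\in\lambda_1$. The key preliminary observation is that, because a partly ordered partition records each block by its \emph{unique} increasing enumeration, the partly ordered structure contributes no additional freedom: a partition $\lambda\in\mathcal P_1^+(j)$ with $|\lambda_i|=j_i$ is determined precisely by the choice of the underlying subsets $\lambda_1,\dots,\lambda_l$, viewed as an ordered tuple of pairwise disjoint sets. Hence the task reduces to counting ordered set-partitions of $\{1,\dots,j\}$ into blocks of the given sizes whose first block contains $1$.

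First I would assign the distinguished element $1$ to $\lambda_1$; the constraint $1\in\lambda_1$ forces this and consumes no choice. The remaining $j_1-1$ elements of $\lambda_1$ must then be selected from the $j-1$ elements of $\{2,\dots,j\}$, which can be done in $\binom{j-1}{j_1-1}$ ways. I would then fill the blocks $\lambda_2,\dots,\lambda_l$ sequentially. When it is time to choose $\lambda_i$ (for $2\leq i\leq l$), the elements already placed number $j_1+\dots+j_{i-1}$, so exactly $j-(j_1+\dots+j_{i-1})=j_i+j_{i+1}+\dots+j_l$ elements remain available, and selecting the $j_i$ of them that form $\lambda_i$ gives $\binom{j_i+\dots+j_l}{j_i}$ possibilities. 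Multiplying these independent choices over $i=2,\dots,l$, and noting that the final block $\lambda_l$ is then determined (consistently, $\binom{j_l}{j_l}=1$), yields $c(j_1,\dots,j_l)=\binom{j-1}{j_1-1}\prod_{i=2}^l\binom{j_i+\dots+j_l}{j_i}$, as claimed.

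For the cardinality of $\mathcal P_1^+(j)$, I would partition the family according to the length $l$ and the ordered block sizes $(j_1,\dots,j_l)$, which range over all compositions of $j$ into $l$ positive parts; summing $c(j_1,\dots,j_l)$ over these data produces the stated double sum. There is no substantial obstacle here: the whole argument is an application of the multiplication principle, and the only point requiring care is verifying that exactly $j_i+\dots+j_l$ elements remain unassigned just before $\lambda_i$ is chosen, which follows from $j_1+\dots+j_l=j$. As a sanity check one may telescope the product to $(j-1)!/\big((j_1-1)!\,j_2!\cdots j_l!\big)=(j_1/j)\binom{j}{j_1,\dots,j_l}$, matching the proportion $j_1/j$ of unrestricted ordered set-partitions whose first block contains $1$.
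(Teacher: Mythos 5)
Your argument is correct and follows essentially the same route as the paper's proof: fix $1\in\lambda_1$, choose the remaining $j_1-1$ elements of $\lambda_1$ from the other $j-1$ elements, then fill $\lambda_2,\dots,\lambda_l$ sequentially, using $j_1+\dots+j_l=j$ to rewrite the number of remaining elements as $j_i+\dots+j_l$. Your explicit remark that the partly ordered structure adds no extra freedom (each block has a unique increasing enumeration) and the telescoping sanity check are welcome additions but do not change the substance.
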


\begin{proof}
For the first part, we write $\lambda_1=\{\alpha_1^1=1<\alpha_2^1<\dots<\alpha_{j_1}^1\}$ for each $\lambda=(\lambda_1,\dots,\lambda_l)\in\mathcal P_1^+(j)$. Thus, there are $\binom{j-1}{j_1-1}$ ways of choosing the elements $\alpha_2^1,\dots,\alpha_{j_1}^1$. We have $n-j_1$ remaining elements, whence there are $\binom{j-j_1}{j_2}$ choices for the elements of $\lambda_2$. Generally, there are $\binom{j-j_1\dots-j_{i-1}}{j_i}$ choices for the elements of $\lambda_i$, $2\leq i\leq l$. In short, the number of partitions in $\mathcal P_1^+(j)$ with $|\lambda_i|=j_i$, $1\leq i\leq l$, is exactly
\begin{equation*}
c(j_1,\dots,j_l)=\binom{j-1}{j_1-1}\prod_{i=2}^{l}\binom{j-j_1\dots-j_{i-1}}{j_i}.
\end{equation*}
We conclude by recalling that $j_1+\dots+j_l=j$. The second part is a straightforward consequence of the first one.
\end{proof}

Given a partition $\lambda=(\lambda_1,\dots,\lambda_l)\in\mathcal P(j)$, we consider two ways to derive a new partition of $\{1,\dots,j+1\}$. Namely, fixed $1\leq s\leq l$ we define
\begin{enumerate}
    \item $\lambda_{[s]}^+=\left(\lambda_1,\dots,\lambda_{s-1},\lambda_{s}\cup\{j+1\},\lambda_{s+1},\dots,\lambda_l\right)$.
    \item $\lambda_{[s]}^-=\left(\lambda_1,\dots,\lambda_{s},\{j+1\},\lambda_{s+1},\dots,\lambda_l\right)$.
\end{enumerate}
For instance, given $\lambda=\left(\{23\},\{1\}\right)\in\mathcal P(3)$, we have $\lambda_{[1]}^+=\left(\{234\},\{1\}\right)$ and $\lambda_{[1]}^-=\left(\{23\},\{4\},\{1\}\right)$. We have the following lemmas.

\begin{lemma}
Let $j\in\mathbb Z^+$, $\lambda=(\lambda_1,\dots,\lambda_l)\in\mathcal P_1^+(j)$ and $1\leq s\leq l$. Then $\lambda_{[s]}^+,\lambda_{[s]}^-\in\mathcal P_1^+(j+1)$.
\end{lemma}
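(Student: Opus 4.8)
The plan is to check, for each of the two constructions, the three conditions that define membership in $\mathcal{P}_1^+(j+1)$: that the tuple is an honest partition of $\{1,\dots,j+1\}$, that it is partly ordered, and that its first block contains $1$. The single fact driving the whole argument is that $j+1$ is strictly larger than every element of $\{1,\dots,j\}$, and in particular larger than every entry occurring in any block $\lambda_i$; this is what I would isolate at the outset.

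First I would confirm that both $\lambda_{[s]}^+$ and $\lambda_{[s]}^-$ lie in $\mathcal{P}(j+1)$. Because each block of $\lambda$ is contained in $\{1,\dots,j\}$, the element $j+1$ belongs to none of them, so adjoining it to $\lambda_s$ or inserting it as a new singleton block keeps all blocks pairwise disjoint; taking unions then gives $\{1,\dots,j\}\cup\{j+1\}=\{1,\dots,j+1\}$ in both cases. Next I would verify partial orderedness: every block other than the one carrying $j+1$ is inherited unchanged from $\lambda$ and hence stays increasing, the singleton $\{j+1\}$ in the $\lambda_{[s]}^-$ construction is trivially ordered, and in the $\lambda_{[s]}^+$ construction appending $j+1$ to $\lambda_s=\{\alpha_1^s<\dots<\alpha_{j_s}^s\}$ leaves $\alpha_1^s<\dots<\alpha_{j_s}^s<j+1$ strictly increasing precisely because $j+1$ is the maximum.

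Finally I would check that $1$ stays in the first block. Since $1\leq s$, neither construction disturbs the position of $\lambda_1$: when $s>1$ the first block is exactly $\lambda_1$, and when $s=1$ it is $\lambda_1\cup\{j+1\}$ in the $\lambda_{[s]}^+$ case and still $\lambda_1$ in the $\lambda_{[s]}^-$ case, each of which contains $1$ by hypothesis. I do not expect any genuine obstacle here; the only point that deserves explicit mention is that the new index $j+1$ exceeds all previous entries, which is exactly what guarantees that appending it to a block preserves the increasing order.
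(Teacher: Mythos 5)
Your proof is correct and complete: the paper states this lemma without proof, treating it as immediate, and your verification of the three defining conditions (partition of $\{1,\dots,j+1\}$, increasing order within each block, and $1$ remaining in the first block) is exactly the routine check the authors leave implicit. The key observation you isolate --- that $j+1$ exceeds every element of $\{1,\dots,j\}$, so appending it to $\lambda_s$ preserves the increasing order --- is indeed the only point of substance.
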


\begin{lemma}
Let $j\in\mathbb Z^+$ and $\lambda=(\lambda_1,\dots,\lambda_{l})\in\mathcal P_1^+(j+1)$. Then there exists a unique $\hat\lambda\in\mathcal P_1^+(j)$ and $1\leq s\leq l$ such that $\lambda=\hat\lambda_{[s]}^+$ or $\lambda=\hat\lambda_{[s]}^-$.
\end{lemma}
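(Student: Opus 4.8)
The plan is to reconstruct $\hat\lambda$ by locating the element $j+1$ inside $\lambda$ and removing it, and then to show that the way $j+1$ sits inside $\lambda$ forces a single choice of operation and of index $s$. This realizes the statement as a kind of inverse to the previous lemma: whereas that lemma shows the two operations land in $\mathcal P_1^+(j+1)$, here I show they are jointly surjective with a unique preimage. The crucial structural fact driving everything is that the two operations leave $j+1$ in qualitatively different positions: $\hat\lambda_{[s]}^+$ places $j+1$ inside a block of size at least two, while $\hat\lambda_{[s]}^-$ places $j+1$ alone in a singleton block. This dichotomy is what separates the two cases and yields uniqueness.

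First I would fix the unique index $r$ with $j+1\in\lambda_r$, which exists and is unique because $\lambda$ is a partition. Since $1\in\lambda_1$ and $j+1\neq 1$ (as $j\geq 1$), the leading block $\lambda_1$ is never the singleton $\{j+1\}$; I will invoke this repeatedly to guarantee that $1$ remains in the first block of whatever $\hat\lambda$ I build, so that the reconstructed object lies in $\mathcal P_1^+(j)$ and not merely in $\mathcal P^+(j)$.

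For existence I would split according to the size of $\lambda_r$. If $|\lambda_r|\geq 2$, I set $\hat\lambda=(\lambda_1,\dots,\lambda_{r-1},\lambda_r\setminus\{j+1\},\lambda_{r+1},\dots,\lambda_l)$; this is a partly ordered partition of $\{1,\dots,j\}$ with $1$ still in its first block (whether $r=1$ or $r\geq 2$), and one checks directly that $\hat\lambda_{[r]}^+=\lambda$. If instead $\lambda_r=\{j+1\}$ is a singleton, then $r\geq 2$ by the remark above, and deleting this block gives $\hat\lambda=(\lambda_1,\dots,\lambda_{r-1},\lambda_{r+1},\dots,\lambda_l)\in\mathcal P_1^+(j)$, for which $\hat\lambda_{[r-1]}^-=\lambda$. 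In both cases the index ($s=r$ in the first, $s=r-1$ in the second) lies in $1\leq s\leq l$, which is a quick check against the lengths of $\hat\lambda$ ($l$ blocks in the $+$ case, $l-1$ in the $-$ case).

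For uniqueness I would argue that any representation of $\lambda$ is forced by the position of $j+1$. A representation $\lambda=\hat\lambda_{[s]}^+$ requires the block of $\lambda$ containing $j+1$ to be non-singleton and located at position $s$, so necessarily $s=r$ and $\hat\lambda$ is recovered by deleting $j+1$ from $\lambda_r$; a representation $\lambda=\hat\lambda_{[s]}^-$ requires that block to be the singleton $\{j+1\}$ at position $s+1$, so necessarily $s=r-1$ and $\hat\lambda$ is recovered by deleting the block. Since the two situations are mutually exclusive, and each determines both $s$ and $\hat\lambda$ completely, the representation is unique. I expect the only real subtlety — the main obstacle — to be the bookkeeping that keeps $\hat\lambda$ inside $\mathcal P_1^+(j)$: verifying that $1$ is never stripped from the leading block and that no block is emptied. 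Both follow cleanly from $j+1\neq 1$ together with the size analysis of $\lambda_r$, so once the dichotomy is set up the argument is essentially a careful case check rather than a deep computation.
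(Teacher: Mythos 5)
Your proposal is correct and follows essentially the same route as the paper: locate the block containing $j+1$, split on whether that block is a singleton, remove $j+1$ (or its block) to construct $\hat\lambda$, and derive uniqueness from the fact that the two operations leave $j+1$ in mutually exclusive configurations. Your uniqueness argument is spelled out a bit more explicitly than the paper's (which just notes that distinct $\hat\lambda$ yield distinct images), but the substance is identical.
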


\begin{proof}
Let $1\leq s\leq l$ be such that $j+1\in\lambda_{s}$. If $|\lambda_{s}|>1$, we pick
\begin{equation*}
\hat\lambda=(\lambda_1,\dots,\lambda_{s-1},\lambda_{s}-\{j+1\},\lambda_{s+1},\dots,\lambda_l)\in\mathcal P_1^+(j).
\end{equation*}
It is thus clear that $\hat\lambda_{[s]}^+=\lambda$. Analogously, if $|\lambda_{s}|=1$, then $s\geq 2$ and we choose
\begin{equation*}
\hat\lambda=(\lambda_1,\dots,\lambda_{s-1},\lambda_{s+1},\dots,\lambda_l)\in\mathcal P_1^+(j).
\end{equation*}
It is straightforward that $\hat\lambda_{[s-1]}^-=\lambda$.

Lastly, observe that if we have two different partitions $\hat\lambda,\tilde\lambda\in\mathcal P_1^+(j)$, then $\hat\lambda_{[s]}^+$, $\hat\lambda_{[s]}^-$, $\tilde\lambda_{[s]}^+$ and $\tilde\lambda_{[s]}^-$ are all distinct, whenever they are defined. Hence, the partition $\hat\lambda$ yielding $\lambda$ is unique.
\end{proof}

The previous lemmas have the following straightforward result.

\begin{proposition}\label{prop:disjointunionpartly}
Let $j\in\mathbb Z^+$ and denote by $\sqcup$ the disjoint union. Then
\begin{equation*}
\mathcal P_1^+(j+1)=\bigsqcup_{\lambda\in\mathcal P_1^+(j)}~\bigsqcup_{s=1}^{l}\left(\left\{\lambda_{[s]}^+\right\}\sqcup\left\{\lambda_{[s]}^-\right\}\right).
\end{equation*}
\end{proposition}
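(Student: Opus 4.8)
The plan is to prove the set equality together with the pairwise disjointness of all the sets in the union, and both facts fall out immediately from the two preceding lemmas. Concretely, I would view the right-hand side as the image of the map $\Phi$ that sends a triple $(\lambda,s,\pm)$, with $\lambda\in\mathcal P_1^+(j)$ of length $l$ and $1\leq s\leq l$, to $\lambda_{[s]}^+$ or $\lambda_{[s]}^-$ respectively, and I would show that $\Phi$ is a bijection onto $\mathcal P_1^+(j+1)$.

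First I would check that $\Phi$ indeed takes values in $\mathcal P_1^+(j+1)$: this is exactly the content of the first of the two lemmas, which asserts $\lambda_{[s]}^+,\lambda_{[s]}^-\in\mathcal P_1^+(j+1)$ for every admissible $(\lambda,s)$. This establishes the inclusion $\supseteq$ in the proposition.

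Next I would establish the inclusion $\subseteq$ and the injectivity of $\Phi$ simultaneously from the second lemma. Given an arbitrary $\mu\in\mathcal P_1^+(j+1)$, its existence part yields $\hat\lambda\in\mathcal P_1^+(j)$ and an index $s$ with $\mu=\hat\lambda_{[s]}^+$ or $\mu=\hat\lambda_{[s]}^-$, so $\mu$ lies in the image of $\Phi$; this gives $\subseteq$. The uniqueness part says that this $\hat\lambda$ and this $s$ are determined by $\mu$, and since $\lambda_{[s]}^+\neq\lambda_{[s]}^-$ always holds (the element $j+1$ belongs to a block of cardinality $\geq 2$ in $\lambda_{[s]}^+$ but to the singleton $\{j+1\}$ in $\lambda_{[s]}^-$), the sign is determined as well. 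Thus no partition in $\mathcal P_1^+(j+1)$ is produced by two distinct triples, which is precisely the assertion that the union on the right-hand side is a \emph{disjoint} union.

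The only point requiring a little care — and hence the main obstacle — is reconciling the index conventions between the two constructions, since in the proof of the second lemma the $\lambda_{[s]}^-$ case is realized as $\hat\lambda_{[s-1]}^-$. I would therefore make explicit that locating $j+1$ inside the unique block of $\mu$ to which it belongs, and distinguishing whether that block is a singleton, recovers both the sign and the correct index unambiguously; this is what guarantees the injectivity of $\Phi$ and completes the identification of the right-hand side with $\mathcal P_1^+(j+1)$.
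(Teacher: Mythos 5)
Your proof is correct and follows essentially the same route as the paper, which states the proposition without proof as a ``straightforward result'' of the two preceding lemmas; you simply make explicit the bijection between triples $(\lambda,s,\pm)$ and elements of $\mathcal P_1^+(j+1)$ that those lemmas provide, including the minor index bookkeeping in the $\lambda_{[s]}^-$ case. Nothing further is needed.
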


Additionally, we are interested in anti-lexicographically ordered partitions.

\begin{definition}
A partition $\lambda=\left(\lambda_1,\dots,\lambda_l\right)\in\mathcal P(j)$ is \emph{anti-lexicographically ordered} if $\max\lambda_1<\dots<\max\lambda_l$, that is to say, if each subset contains the highest available number when going from right to left.
\end{definition}

We denote by $\mathcal P^a(j)$ the set of all anti-lexicographically ordered partitions. In order to illustrate both definitions, consider $\mathcal P(3)$. Then $\lambda=\left(\{12\},\{3\}\right)$ is both partly and anti-lexicographically ordered, $\lambda=\left(\{3\},\{12\}\right)$ is partly ordered but not anti-lexicographically ordered, $\lambda=\left(\{21\},\{3\}\right)$ is anti-lexicographically ordered, but not partly ordered, and $\lambda=\left(\{3\},\{21\}\right)$ is not partly nor anti-lexicographically ordered. 

\begin{proposition}[\cite{Vi2013}]\label{prop:amountapartitions}
Let $j\in\mathbb Z^+$ and $1\leq l\leq j$. Fixed $j_1,\dots,j_l\in\{1,\dots,j\}$ such that $j_1+\dots+j_l=j$, there are exactly
\begin{equation*}
N(j_1,\dots,j_l)=\prod_{i=2}^l\binom{j_1+\dots+j_i-1}{j_i-1}
\end{equation*}
different partitions $\lambda=\left(\lambda_1,\dots,\lambda_l\right)\in\mathcal P^a(j)$ with $|\lambda_i|=j_i$, $1\leq i\leq l$.
\end{proposition}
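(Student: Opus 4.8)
The plan is to argue by induction on the length $l$, peeling off the last block $\lambda_l$. The crucial structural observation is that, by the definition of anti-lexicographic ordering, the block maxima satisfy $\max\lambda_1<\dots<\max\lambda_l$; since the blocks are disjoint and cover $\{1,\dots,j\}$, the largest element $j$ must lie in the block with the largest maximum, which is precisely $\lambda_l$. Hence, in every $\lambda=(\lambda_1,\dots,\lambda_l)\in\mathcal P^a(j)$ with the prescribed sizes, the block $\lambda_l$ is forced to contain $j$, while its remaining $j_l-1$ elements may be chosen arbitrarily among $\{1,\dots,j-1\}$, giving $\binom{j-1}{j_l-1}$ possibilities. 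This mirrors the block-by-block bookkeeping already used for $\mathcal P_1^+(j)$, but organized from the top rather than the bottom.

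Writing $s_i=j_1+\dots+j_i$, so that $s_l=j$ and $\binom{j-1}{j_l-1}=\binom{s_l-1}{j_l-1}$, I would then note that once $\lambda_l$ is fixed, the leftover set $S=\{1,\dots,j\}\setminus\lambda_l$ has cardinality $s_{l-1}$ and must be partitioned anti-lexicographically into blocks of sizes $j_1,\dots,j_{l-1}$. Here one uses that $\lambda_l$ already realizes the global maximum $j$, so it automatically has the largest block-maximum and imposes no constraint beyond the internal max-ordering of $\lambda_1,\dots,\lambda_{l-1}$. The number of such partitions of $S$ depends only on $|S|=s_{l-1}$ and not on the specific elements, because the unique order-preserving bijection $S\to\{1,\dots,s_{l-1}\}$ carries anti-lexicographically ordered partitions to anti-lexicographically ordered partitions and preserves block sizes. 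Consequently this count equals $N(j_1,\dots,j_{l-1})$, and one obtains the recursion
\begin{equation*}
N(j_1,\dots,j_l)=\binom{s_l-1}{j_l-1}\,N(j_1,\dots,j_{l-1}).
\end{equation*}

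Finally I would close the induction. The base case $l=1$ has the single partition $\left(\{1,\dots,j\}\right)$, matching the empty product $N(j_1)=1$; substituting the inductive hypothesis $N(j_1,\dots,j_{l-1})=\prod_{i=2}^{l-1}\binom{s_i-1}{j_i-1}$ into the recursion yields $\prod_{i=2}^{l}\binom{s_i-1}{j_i-1}$, as claimed (the omitted $i=1$ factor would be $\binom{s_1-1}{j_1-1}=\binom{j_1-1}{j_1-1}=1$, consistent with the product starting at $i=2$). I expect the one step requiring genuine care to be the order-invariance claim: making precise that fixing the top block $\lambda_l$ and relabeling the remainder sets up an honest bijection between the two relevant families of anti-lexicographically ordered partitions, so that the count truly factors as a binomial coefficient times $N(j_1,\dots,j_{l-1})$. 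The remainder is routine manipulation of binomial coefficients.
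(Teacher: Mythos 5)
Your argument is correct. Note that the paper does not actually prove this proposition --- it is imported from [Vi2013] --- so there is no in-house proof to compare against; what you have written is a clean, self-contained substitute. Your decomposition is the exact mirror of the paper's proof of the preceding proposition on $\mathcal P_1^+(j)$: there the first block is pinned down because it must contain $1$ and the count proceeds left to right, whereas you pin down the last block because it must contain $j$ (the strict chain of maxima forces $j\in\lambda_l$) and peel from the right. The key points are all in place: the choice of the remaining $j_l-1$ elements of $\lambda_l$ is unconstrained, giving $\binom{j-1}{j_l-1}=\binom{s_l-1}{j_l-1}$; the condition $\max\lambda_{l-1}<\max\lambda_l$ is automatic once $j\in\lambda_l$; and the order-preserving relabelling $S\to\{1,\dots,s_{l-1}\}$ preserves the anti-lexicographic condition, so the residual count is $N(j_1,\dots,j_{l-1})$ by induction on $l$. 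An alternative route, more in the spirit of the paper's surrounding machinery (and presumably of [Vi2013]), would be induction on $j$ via the disjoint-union decomposition of Proposition \ref{prop:disjointunionantilexicographically}, tracking how each operation $\lambda_{[s]}$ changes the size vector $(j_1,\dots,j_l)$; your direct block-peeling count is more elementary and avoids that bookkeeping. Small quibble: the recursion only makes sense for $l\geq 2$, which is fine since $l=1$ is your base case, but it is worth saying so explicitly.
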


Given a partition $\lambda=\left(\lambda_1,\dots,\lambda_l\right)\in\mathcal P(j)$, we derive a partition of $\{1,\dots,j+1\}$ as follows. Fix $1\leq s\leq l$ and define
\begin{equation*}
\lambda_{[s]}=\left(\hat\lambda_1,\dots,\hat\lambda_{s}\cup\{1\},\dots\hat\lambda_l\right),
\end{equation*}
where $\hat\lambda_i=\{\alpha_1^i+1,\dots,\alpha_{j_i}^i+1\}$ for each $1\leq i\leq l$. Similarly, we define
\begin{equation*}
\lambda_{[0]}=\left(\{1\},\hat\lambda_1,\dots,\hat\lambda_l\right).
\end{equation*}

\begin{lemma}
Let $j\in\mathbb Z^+$, $\lambda=(\lambda_1,\dots,\lambda_l)\in\mathcal P^a(j)$ and $0\leq s\leq l$. Then $\lambda_{[s]}\in\mathcal P^a(j+1)$.
\end{lemma}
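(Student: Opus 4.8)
The plan is to reduce the whole statement to one elementary observation: translating every index upward by one shifts all block maxima up by one but preserves strict inequalities, while making $2$ the smallest index that can occur in any shifted block; consequently the newly inserted element $1$ can never be the maximum of a block containing shifted indices, so inserting it leaves the list of block maxima undisturbed (for $1\le s\le l$) or merely prepends the value $1$ (for $s=0$).

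First I would record the effect of the shift. Writing $\hat\lambda_i=\{\alpha^i_1+1,\dots,\alpha^i_{j_i}+1\}$, the map $m\mapsto m+1$ is strictly increasing, so $\max\hat\lambda_i=\max\lambda_i+1$ for each $1\le i\le l$. Hence the hypothesis $\max\lambda_1<\dots<\max\lambda_l$ is equivalent to the chain $\max\hat\lambda_1<\dots<\max\hat\lambda_l$. Moreover every element of every $\hat\lambda_i$ is at least $2$, so in particular $\max\hat\lambda_i\ge 2>1$ for all $i$.

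Next I would treat the two cases. For $1\le s\le l$ the only block of $\lambda_{[s]}$ differing from $\hat\lambda$ is the $s$-th one, now equal to $\hat\lambda_s\cup\{1\}$; but $\max(\hat\lambda_s\cup\{1\})=\max\hat\lambda_s$ since $\max\hat\lambda_s\ge 2>1$. Thus the ordered list of block maxima of $\lambda_{[s]}$ coincides verbatim with that of $\hat\lambda$, and the chain $\max\hat\lambda_1<\dots<\max\hat\lambda_l$ yields the anti-lexicographic condition directly. For $s=0$ the blocks are $\{1\},\hat\lambda_1,\dots,\hat\lambda_l$, whose maxima form the list $1,\max\hat\lambda_1,\dots,\max\hat\lambda_l$; since $1<2\le\max\hat\lambda_1$, prepending $1$ preserves the strictly increasing chain, giving again membership in $\mathcal P^a(j+1)$.

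Finally I would verify that $\lambda_{[s]}$ is genuinely a partition of $\{1,\dots,j+1\}$: the shifted blocks $\hat\lambda_1,\dots,\hat\lambda_l$ remain pairwise disjoint and cover $\{2,\dots,j+1\}$, and in each construction the index $1$ is introduced exactly once, either into a single existing block or as a new leading singleton, so the union is all of $\{1,\dots,j+1\}$ with no repetition. None of the steps poses a genuine obstacle; the only point deserving care is the inequality $\max\hat\lambda_i\ge 2$, which is precisely what guarantees that inserting $1$ cannot change any block maximum, and hence that the anti-lexicographic order survives the operation.
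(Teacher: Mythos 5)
Your proof is correct and complete: the paper states this lemma without proof, treating it as immediate, and your argument (the shift $m\mapsto m+1$ preserves the chain of block maxima, and inserting $1$ cannot alter any maximum since every shifted block has maximum at least $2$) is exactly the verification the authors leave to the reader. The additional check that $\lambda_{[s]}$ is genuinely a partition of $\{1,\dots,j+1\}$ is a welcome touch of thoroughness.
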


\begin{lemma}[\cite{Vi2013}]
Let $j\in\mathbb Z^+$ and $\lambda=(\lambda_1,\dots,\lambda_l)\in\mathcal P^a(j+1)$. Then there exists a unique $\hat\lambda\in\mathcal P^a(j)$ and $0\leq s\leq l$ such that $\lambda=\hat\lambda_{[s]}$.
\end{lemma}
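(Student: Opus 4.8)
The plan is to locate the element $1$ inside $\lambda$ and invert the construction $\hat\lambda\mapsto\hat\lambda_{[s]}$, in direct analogy with the argument given above for $\mathcal P_1^+(j+1)$ but with the smallest element $1$ now playing the role that $j+1$ played there. The first thing I would record is a structural consequence of the anti-lexicographic ordering: since $1$ is the minimum of $\{1,\dots,j+1\}$, whenever $1$ forms a block by itself that block has maximum $1$, the smallest possible value, and by the strict monotonicity $\max\lambda_1<\dots<\max\lambda_l$ this forces the singleton $\{1\}$ to be the first block $\lambda_1$. It is exactly this fact that will single out the case $s=0$.

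Next I would split into two cases according to the block $\lambda_r$ that contains $1$. If $|\lambda_r|\geq 2$, I set $s=r$ and define $\hat\lambda$ by deleting $1$ from $\lambda_r$ and subtracting one from every surviving entry; a direct check then gives $\hat\lambda_{[s]}=\lambda$. If instead $\lambda_r=\{1\}$, the structural fact forces $r=1$, so I set $s=0$ and define $\hat\lambda$ by discarding the block $\{1\}$ and again subtracting one from every entry, which yields $\hat\lambda_{[0]}=\lambda$. In both cases the verification $\hat\lambda\in\mathcal P^a(j)$ is routine: subtracting one from all entries preserves the strict ordering of the block maxima, and in the first case removing the entry $1$ from a block of size at least two does not change that block's maximum.

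For uniqueness I would show that both $s$ and $\hat\lambda$ are recoverable from $\lambda$ alone, so that distinct pairs cannot produce the same partition. The decisive observation is that in any partition of the form $\hat\lambda_{[s]}$ the element $1$ lies in a singleton block precisely when $s=0$, and otherwise lies in the block of index $s$; hence the position of $1$ in $\lambda$, together with whether its block is a singleton, pins down $s$ unambiguously. Once $s$ is fixed, the inverse operation---remove $1$ and decrease every entry by one---recovers $\hat\lambda$ uniquely.

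The only delicate point, and hence the step I would treat most carefully, is this uniqueness bookkeeping: one must confirm that the singleton versus non-singleton dichotomy together with the index of the block containing $1$ genuinely separates all the operations $\hat\lambda_{[s]}$, $s\geq 0$, even when the source partition $\hat\lambda$ is allowed to vary. This mirrors the argument for $\mathcal P_1^+$, where the derived partitions were shown to be pairwise distinct whenever defined; here it is in fact simpler, since each $\lambda$ arises through the single rule $\lambda_{[s]}$ rather than the two competing rules $\lambda_{[s]}^{\pm}$.
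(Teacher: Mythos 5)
Your proof is correct: the paper itself defers this lemma to \cite{Vi2013} without proof, and your argument is precisely the natural adaptation of the paper's own proof of the analogous lemma for $\mathcal P_1^+(j+1)$, with the element $1$ and the single insertion rule $\hat\lambda_{[s]}$ playing the roles that $j+1$ and the two rules $\hat\lambda_{[s]}^{\pm}$ play there. The two key observations --- that a singleton block $\{1\}$ is forced by $\max\lambda_1<\dots<\max\lambda_l$ to be the first block, so the singleton/non-singleton dichotomy together with the index of the block containing $1$ pins down $s$, after which removing $1$ and shifting down recovers $\hat\lambda$ --- are exactly what is needed.
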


\begin{proposition}\label{prop:disjointunionantilexicographically}
For each $j\in\mathbb Z^+$ we have
\begin{equation*}
\mathcal P^a(j+1)=\bigsqcup_{\lambda\in\mathcal P^a(j)}~\bigsqcup_{s=0}^l\left\{\lambda_{[s]}\right\}.
\end{equation*}
\end{proposition}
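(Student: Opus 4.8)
The plan is to derive this decomposition directly from the two immediately preceding lemmas, in exactly the same way that Proposition \ref{prop:disjointunionpartly} followed from its companion lemmas. There are two things to verify: that the indicated union is contained in $\mathcal P^a(j+1)$, and that it in fact equals $\mathcal P^a(j+1)$ with the union being genuinely disjoint (as the symbol $\sqcup$ asserts).

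First I would establish the inclusion $\supseteq$. Fixing $\lambda=(\lambda_1,\dots,\lambda_l)\in\mathcal P^a(j)$ and $0\le s\le l$, the first of the two lemmas states precisely that $\lambda_{[s]}\in\mathcal P^a(j+1)$. Hence each singleton $\{\lambda_{[s]}\}$ appearing on the right-hand side is a subset of $\mathcal P^a(j+1)$, and therefore so is their union.

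Next I would establish the reverse inclusion $\subseteq$ together with the disjointness, which is where the actual content sits. Let $\mu\in\mathcal P^a(j+1)$ be arbitrary. The second lemma guarantees the existence of $\hat\lambda\in\mathcal P^a(j)$ and $0\le s\le l$ with $\mu=\hat\lambda_{[s]}$, so $\mu$ lies in the term of the union indexed by $(\hat\lambda,s)$; this gives $\mathcal P^a(j+1)\subseteq\bigsqcup_\lambda\bigsqcup_s\{\lambda_{[s]}\}$. For the disjointness, I would invoke the \emph{uniqueness} clause of the same lemma: since the pair $(\hat\lambda,s)$ producing a given $\mu$ is unique, no element of $\mathcal P^a(j+1)$ can occur in two distinct index terms, so the families $\{\lambda_{[s]}\}$ are pairwise disjoint as $(\lambda,s)$ ranges over all admissible pairs. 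Combining the two inclusions with disjointness then yields the asserted equality of sets.

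I do not expect a genuine obstacle here, since the substantive work has already been packaged into the existence-and-uniqueness lemma drawn from \cite{Vi2013}. The only point deserving care is to read the disjoint-union notation correctly, namely that $\sqcup$ asserts not merely a set-theoretic union but the mutual disjointness of the indexed singletons, and this is exactly what the uniqueness half of that lemma supplies.
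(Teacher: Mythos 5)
Your proposal is correct and matches the paper's (implicit) argument: the proposition is stated there as an immediate consequence of the two preceding lemmas, with the first lemma giving containment of each $\left\{\lambda_{[s]}\right\}$ in $\mathcal P^a(j+1)$ and the existence-and-uniqueness lemma from \cite{Vi2013} supplying surjectivity and disjointness, exactly as you lay out.
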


\section{Trivialization for the higher order jet bundle}\label{sec:trivalization}

Let $G$ be a Lie group bundle, $\mathfrak g$ be its Lie algebra and $X$ be a smooth manifold. In the following, we regard the jet defined by a (local) function $g:X\to G$ as an element of the vector bundle $j_x^1 g\equiv (dg)_x\in T^*X\otimes TG$ over $X$. Let $\nabla^X:\Gamma(TX)\to\Gamma(T^*X\otimes TX)$ be a linear connection on the tangent bundle of $X$, and consider the dual connection, $\nabla:\Gamma(T^*X)\to\Gamma(T^*X\otimes T^*X)$, on the cotangent bundle. For each $k\in\mathbb Z^+$, let 
\begin{equation*}
\nabla^{(k)}:\Gamma\left(\bigotimes^k T^*X\to X\right)\longrightarrow\Gamma\left(\bigotimes^{k+1}T^*X\to X\right)    
\end{equation*}
be the corresponding tensor product connection on $\bigotimes^k T^*X\to X$, which is again a linear connection \cite[\S 2.4]{MaSa2000}. Note that $\nabla^{(1)}=\nabla$. At last, we extend our linear connections to maps,
\begin{equation*}
\tilde\nabla^{(k)}:\Gamma\left(\bigotimes^k T^*X\otimes\mathfrak g\to X\right)\longrightarrow\Gamma\left(\bigotimes^{k+1}T^*X\otimes\mathfrak g\to X\right),
\end{equation*}
Observe that this works although $\nabla^{(k)}$ is not tensorial, since $\mathfrak g$ is not a vector bundle, but a vector space. Indeed, if $(U,x)$ is a coordinated chart of $X$, where we write $x=(x^\mu)=(x^1,\dots,x^n)$, and $\{B_1,\dots,B_m\}$ is a basis of $\mathfrak g$, we set
\begin{equation*}
\tilde\nabla^{(k)}\left(\xi_{\mu_1,\dots,\mu_k}^\alpha\,dx^{\mu_1}\otimes\dots\otimes dx^{\mu_k}\otimes B_\alpha\right)=\nabla^{(k)}\left(\xi_{\mu_1,\dots,\mu_k}^\alpha\,dx^{\mu_1}\otimes\dots\otimes dx^{\mu_k}\right)\otimes B_\alpha,    
\end{equation*}
for each $\xi_{\mu_1,\dots,\mu_k}^\alpha:X\to\mathbb R$, $1\leq\mu_1,\dots,\mu_k\leq n$, $1\leq\alpha\leq m$.

\begin{theorem}
For each $k\in\mathbb Z^+$, the following is an injective morphism of bundles over $X\times G$,
\begin{equation}\label{eq:injectionhigherorderjet}
J^k(X,G)\hookrightarrow G\times\bigoplus_{j=1}^k\left(\bigotimes^j T^*X\otimes\mathfrak g\right),\qquad j_x^k g\mapsto\left(g(x),\xi^{(1)}(x),\dots,\xi^{(k)}(x)\right),
\end{equation}
where $$\xi^{(1)}(x)=\left(dR_{g(x)^{-1}}\right)_{g(x)}\circ(dg)_x\in T_x^*X\otimes\mathfrak g,$$ and $$\xi^{(j)}(x)=(\tilde\nabla^{(j-1)}\xi^{(j-1)})(x)\in\bigotimes^j T_x^*X\otimes\mathfrak g, \quad 2\leq j\leq k.$$
\end{theorem}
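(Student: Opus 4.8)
The plan is to establish three properties of the assignment in \eqref{eq:injectionhigherorderjet} in turn: that it is well defined on jets (independent of the chosen representative $g$), that it is a smooth bundle morphism over $X\times G$, and that it is injective. Note first that to define $\xi^{(j)}(x)$ one must choose a representative $g$ defined on a neighborhood of $x$, form the local section $\xi^{(j-1)}$, and apply $\tilde\nabla^{(j-1)}$; that the result at $x$ does not depend on this choice is exactly the well-definedness we must prove. The heart of the matter is an inductive analysis, carried out in a coordinated chart, of how $\xi^{(j)}$ depends on the partial derivatives of $g$.

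I would fix a chart $(U,x)$ of $X$ with $x=(x^\mu)$ and a basis $\{B_1,\dots,B_m\}$ of $\mathfrak g$, and write $\xi^{(j)}=\xi^{(j)\alpha}_{\mu_1\dots\mu_j}\,dx^{\mu_1}\otimes\dots\otimes dx^{\mu_j}\otimes B_\alpha$. By definition $\xi^{(1)\alpha}_\mu=F^\alpha_\beta(g)\,\partial_\mu g^\beta$, where $F^\alpha_\beta(g)$ is the matrix of the linear isomorphism $(dR_{g^{-1}})_g\colon T_gG\to\mathfrak g$, hence invertible for every $g$. Using the local formula for the tensor product connection, the recursion $\xi^{(j)}=\tilde\nabla^{(j-1)}\xi^{(j-1)}$ becomes
\begin{equation*}
\xi^{(j)\alpha}_{\nu\mu_1\dots\mu_{j-1}}=\partial_\nu\xi^{(j-1)\alpha}_{\mu_1\dots\mu_{j-1}}-\sum_{i=1}^{j-1}\Gamma^{\lambda}_{\nu\mu_i}\,\xi^{(j-1)\alpha}_{\mu_1\dots\lambda\dots\mu_{j-1}},
\end{equation*}
with $\Gamma^\lambda_{\nu\mu}$ the Christoffel symbols of $\nabla$ and $\lambda$ replacing $\mu_i$. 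Since the Christoffel terms and the derivative of $F^\alpha_\beta(g)$ only reintroduce derivatives of $g$ already present, an induction on $j$ yields the key structural formula
\begin{equation*}
\xi^{(j)\alpha}_{\mu_1\dots\mu_j}=F^\alpha_\beta(g)\,\partial_{\mu_1}\!\dots\partial_{\mu_j} g^\beta+R^{(j)\alpha}_{\mu_1\dots\mu_j},
\end{equation*}
where the remainder $R^{(j)}$ is a universal smooth expression in $g$, the Christoffel symbols and their derivatives, and the partial derivatives of $g$ \emph{of order at most} $j-1$.

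This single formula delivers both well-definedness and injectivity. For well-definedness, its right-hand side involves the derivatives of $g$ only up to order $j$ at the point, so $\xi^{(j)}(x)$ depends solely on $j_x^j g$; consequently the whole tuple $(g(x),\xi^{(1)}(x),\dots,\xi^{(k)}(x))$ depends only on $j_x^k g$. (Conceptually, $\tilde\nabla^{(j-1)}$ is a first-order operator and so raises the differential order in $g$ by one, while $\xi^{(1)}$ is first order in $g$.) That the map covers the identity on $X$ and sends $j_x^k g$ to a point with $G$-component $g(x)$ shows it is a morphism over $X\times G$, and smoothness is immediate from the local expression. For injectivity, suppose $j_x^k g$ and $j_x^k h$ have the same image; then $g(x)=h(x)$ and $\xi^{(j)}_g(x)=\xi^{(j)}_h(x)$ for $1\le j\le k$. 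The case $j=1$ recovers $(dg)_x$ from $g(x)$ and $\xi^{(1)}(x)$ because $(dR_{g(x)^{-1}})_{g(x)}$ is an isomorphism. Assuming inductively that the partial derivatives of $g$ and $h$ up to order $j-1$ agree at $x$, the term $R^{(j)}$ takes the same value for both, so the structural formula together with the invertibility of $F(g(x))$ forces $\partial_{\mu_1}\!\dots\partial_{\mu_j} g^\beta(x)=\partial_{\mu_1}\!\dots\partial_{\mu_j} h^\beta(x)$ for all indices. Hence all partials up to order $k$ agree at $x$, that is, $j_x^k g=j_x^k h$.

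The main obstacle is proving the structural formula cleanly, namely isolating the fact that the unique top-order ($j$-th derivative) contribution to $\xi^{(j)}$ passes through the leading term $F^\alpha_\beta(g)\,\partial_{\mu_1}\!\dots\partial_{\mu_j}g^\beta$, with the invertible factor $F(g)$ unchanged along the covariant-differentiation history, while every curvature- and Christoffel-dependent term is shown to be strictly lower order. Once this triangular structure is in place, the well-definedness and the inductive recovery in the injectivity step both follow immediately. A minor point worth flagging is that $\partial_{\mu_1}\!\dots\partial_{\mu_j}g^\beta$ is symmetric in its indices whereas $\xi^{(j)}$ is an arbitrary tensor; this asymmetry is precisely why the morphism is not surjective for $\dim X\ge 2$, but for injectivity it suffices to read off the symmetric part.
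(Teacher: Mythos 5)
Your proposal is correct, and while it shares the paper's basic strategy (work in a chart, induct on the order, exploit that the connection differs from the flat derivative by a zeroth-order term), it is organized differently and is in one respect more careful. The paper's induction stays at the level of the whole map: assuming the first $k-1$ components of the two tuples agree, it writes $\nabla^{(k-1)}=\mathrm{d}+\Gamma^{(k-1)}$, pushes a discrepancy in the $k$-th component onto $\left(\mathrm{d}\xi^{(k-1)}\right)_x$, and concludes that the $k$-jets differ. You instead unwind the recursion all the way down to the partial derivatives of $g$, isolating the triangular structure $\xi^{(j)}=F(g)\,\partial^{j}g+(\textrm{terms of order at most } j-1)$ with $F(g)$ invertible, and then recover the partials of $g$ at $x$ inductively from the tuple. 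This buys two things. First, it makes well-definedness explicit ($\xi^{(j)}(x)$ depends only on $j_x^{j}g$), which the paper leaves implicit. Second, it argues in the logically correct direction for injectivity: you show that equal images force equal jets, whereas the paper's argument as literally written (``if the tuples differ then the jets differ'') is the contrapositive of well-definedness and tacitly relies on exactly the recoverability that your structural formula supplies; your version closes that gap. Your closing observation that only the symmetric part of the top-order term is used, and that this is why surjectivity fails for $\dim X\geq 2$, is consistent with the paper's remark after the theorem and with the $k=2$ computation of Section 5. The only point to tidy in a full write-up is the placement of the new covariant index (you prepend it, the paper appends it), which is purely conventional.
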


\begin{proof}
We show the result by induction in $k$. For $k=1$, the first jet defined by a (local) function $g:X\to G$ is given by $j_x^1g=(dg)_x: T_x X\to T_{g(x)} G$. Subsequently, the map is given by composing this jet with the right trivialization $T_g G\ni U_g\mapsto(g,(dR_{g^{-1}})_g(U_g))\in G\times\mathfrak g$. Namely,
\begin{equation*}
J^1(X,G)\hookrightarrow G\times(T^*X\otimes\mathfrak g),\qquad j_x^1g\mapsto\big(g(x),(dR_{g(x)^{-1}})\circ(dg)_x\big),
\end{equation*}
which is injective since it is an isomorphism of fiber bundles.

Now, given $k>1$, we assume that \eqref{eq:injectionhigherorderjet} is injective for $k-1$. Let $j_x^k g,j_x^k h\in J^k(X,G)$ be such that
\begin{equation*}
\left(g(x),\xi^{(1)}(x),\dots,\xi^{(k)}(x)\right)\neq \left(h(x),\eta^{(1)}(x),\dots,\eta^{(k)}(x)\right),
\end{equation*}
where $\xi^{(1)}=\left(dR_{g^{-1}}\right)_g\circ dg$, $\eta^{(1)}=\left(dR_{h^{-1}}\right)_h\circ dh$, $\xi^{(j)}=\tilde\nabla^{(j-1)}\xi^{(j-1)}$ and $\eta^{(j)}=\tilde\nabla^{(j-1)}\eta^{(j-1)}$, $2\leq j\leq k$. If either $g(x)\neq h(x)$ or $\xi^{(j)}(x)\neq\eta^{(j)}(x)$ for some $1\leq j\leq k-1$, then $j_x^k g\neq j_x^k h$ by the induction hypothesis. At last, suppose that $g(x)=h(x)$, $\xi^{(j)}(x)=\eta^{(j)}(x)$, $1\leq j\leq k-1$ and $\xi^{(k)}(x)\neq\eta^{(k)}(x)$. Recall that, in coordinates, we may write (cf. \cite[\S 2.4]{MaSa2000}) $\nabla^{(k-1)}=\textrm{\normalfont d}+\Gamma^{(k-1)}$, where $\textrm{\normalfont d}:\Gamma\left(\bigotimes^{k-1}T^*X\right)\to\Gamma\left(\bigotimes^k T^*X\right)$ is the exterior derivative (which is well-defined, since we are working on a coordinated chart) and $\Gamma^{(k-1)}\in\Gamma\left(T^*X\otimes\textrm{\normalfont End}\left(\bigotimes^{k-1}T^*X\right)\right)$, where $\textrm{\normalfont End}\left(\bigotimes^{k-1}T^*X\right)$ is the bundle of endomorphisms of $\bigotimes^{k-1}T^*X$. We have $\Gamma^{(k-1)}\left(\xi^{(k-1)}(x)\right)=\Gamma^{(k-1)}\left(\eta^{(k-1)}(x)\right)$ and $\tilde\nabla^{(k-1)}\xi^{(k-1)}(x)\neq\tilde\nabla^{(k-1)}\eta^{(k-1)}(x)$. Thence $\left(\textrm{\normalfont d}\xi^{(k-1)}\right)_x\neq\left(\textrm{\normalfont d}\eta^{(k-1)}\right)_x$. Subsequently, $j_x^k g\neq j_x^k h$, and we conclude.
\end{proof}

By means of this map, we denote elements of the $k$-th order jet bundle by
\begin{equation*}
\left(g,\xi^{(1)},\dots,\xi^{(k)}\right)\in J^k(X,G),
\end{equation*}
for some $g\in G$ and $\xi^{(j)}\in\bigotimes^j T^*X\otimes\mathfrak g$, $1\leq j\leq k$, all of them projecting to certain $x\in X$. 

\begin{remark}
Except for $k=1$, the morphism \eqref{eq:injectionhigherorderjet} need not be surjective, that is, there are elements in $G\times\bigoplus_{j=1}^k\left(\bigotimes^j T^*X\otimes\mathfrak g\right)$, $k\geq 2$, that does not represent an element of $J^k(X,G)$. However, Example \ref{example} below will show that \eqref{eq:injectionhigherorderjet} is always bijective for the particular case $X=\mathbb R$.
\end{remark}

\section{Lie groupoid structure of the higher order jet bundle} \label{sec:structure}

Since $G$ is a Lie group, for each $k\in\mathbb Z^+$ the jet bundle $J^k(X,G)\to X$ is endowed with a natural fibered multiplication. More specifically, the following map is well-defined:
\begin{equation*}
J^k(X,G)\times_X J^k(X,G)\to J^k (X,G),\qquad \left(j_x^k g,j_x^k h\right)\mapsto j_x^k(gh),
\end{equation*}
where $\times_X$ denotes the fibered product of bundles over $X$. Therefore, $J^k(X,G)\rightrightarrows X$ is a Lie groupoid with both the source and the target maps being $\pi_{J^k(X,G),X}$. The aim of this section is to compute the expression of the fibered multiplication under the identification \eqref{eq:injectionhigherorderjet}. We start with the following technical lemma.

\begin{lemma}\label{lemma:nabladjoint}
Let $j\in\mathbb Z^+$, $g:X\to G$ and $\chi^{(j)}\in\Gamma\left(\bigotimes^j T^*X\otimes\mathfrak g\to X\right)$, and denote $\chi=\left(dR_{g^{-1}}\right)_g\circ dg\in\Gamma\left(T^*X\otimes\mathfrak g\right)$. Then
\begin{equation*}
\tilde\nabla^{(j)}\left(\Ad_g\circ\chi^{(j)}\right)=\ad(\chi)\left(\Ad_g\circ\chi^{(j)}\right)+\Ad_g\circ\tilde\nabla^{(j)}\chi^{(j)}.
\end{equation*}
\end{lemma}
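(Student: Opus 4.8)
The plan is to verify the identity in a local coordinate chart $(U,x)$ together with a basis $\{B_1,\dots,B_m\}$ of $\mathfrak g$, which is legitimate because $\tilde\nabla^{(j)}$ is defined chart- and basis-wise and the claimed equality is local and $\mathbb R$-linear in $\chi^{(j)}$. I would write $\chi^{(j)}=\omega^\alpha\otimes B_\alpha$, where the component tensors $\omega^\alpha\in\Gamma(\bigotimes^j T^*X)$ are ordinary ($\mathfrak g$-free) tensor fields, and expand $\Ad_g(B_\alpha)=(\Ad_g)^\beta_\alpha\,B_\beta$ in terms of the smooth matrix-entry functions $(\Ad_g)^\beta_\alpha:X\to\mathbb R$. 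In this form the statement reduces to the Leibniz rule for $\nabla^{(j)}$ together with a single differential-geometric fact about $\Ad$.

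That fact, which I expect to be the crux, is the formula for the derivative of the adjoint along $g$,
$$d(\Ad_g\,\eta)=\ad(\chi)(\Ad_g\,\eta),\qquad\eta\in\mathfrak g\text{ constant}.$$
I would prove it pointwise: given $v\in T_xX$, choose a curve through $x$ and write $g(t)=h(t)\,g(x)$ with $h(0)=e$ and $\dot h(0)=(dR_{g(x)^{-1}})_{g(x)}\big((dg)_x(v)\big)=\chi(v)$; then $\Ad_{g(t)}=\Ad_{h(t)}\,\Ad_{g(x)}$, and differentiating at $t=0$ using $T_e\Ad=\ad$ gives $d(\Ad_g\,\eta)(v)=\ad(\chi(v))(\Ad_g\,\eta)$. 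Taking $\eta=B_\alpha$ this reads $d\big((\Ad_g)^\beta_\alpha\big)\otimes B_\beta=\ad(\chi)(\Ad_g B_\alpha)$ as an equality of $\mathfrak g$-valued one-forms.

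Then I would compute directly. By the definition of $\tilde\nabla^{(j)}$ and the Leibniz rule for the linear connection $\nabla^{(j)}$ (with the new cotangent slot in the leading position, matching the convention $\nabla^{(j)}=\textrm{\normalfont d}+\Gamma^{(j)}$ used earlier),
$$\tilde\nabla^{(j)}\big(\Ad_g\circ\chi^{(j)}\big)=\nabla^{(j)}\big((\Ad_g)^\beta_\alpha\,\omega^\alpha\big)\otimes B_\beta=d\big((\Ad_g)^\beta_\alpha\big)\otimes\omega^\alpha\otimes B_\beta+(\Ad_g)^\beta_\alpha\,\nabla^{(j)}\omega^\alpha\otimes B_\beta.$$
The second summand is exactly $\Ad_g\circ\big(\nabla^{(j)}\omega^\alpha\otimes B_\alpha\big)=\Ad_g\circ\tilde\nabla^{(j)}\chi^{(j)}$. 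For the first summand, summing the key formula over $\alpha$ and keeping $\omega^\alpha$ in its $j$ slots yields $d\big((\Ad_g)^\beta_\alpha\big)\otimes\omega^\alpha\otimes B_\beta=\ad(\chi)\big(\omega^\alpha\otimes\Ad_g B_\alpha\big)=\ad(\chi)\big(\Ad_g\circ\chi^{(j)}\big)$, where $\ad(\chi)$ is read as the $\mathrm{End}(\mathfrak g)$-valued one-form that inserts a leading cotangent slot and brackets into the $\mathfrak g$-factor. Adding the two summands gives the asserted identity.

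The only delicate bookkeeping, and the step I would check most carefully, is the placement of the extra cotangent slot: I must ensure that the exterior-derivative slot produced by $\nabla^{(j)}$, the $\chi$-slot produced by $\ad(\chi)$, and the new slot of $\tilde\nabla^{(j)}\chi^{(j)}$ all occupy the same (leading) position, so that the three $(j+1)$-tensors being compared agree slot by slot rather than merely up to a permutation. Everything past that is the Leibniz rule and $\mathbb R$-linearity, so the proof is essentially the derivative-of-$\Ad$ formula dressed with one application of the connection's product rule.
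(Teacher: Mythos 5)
Your proof is correct and rests on exactly the same key fact as the paper's: the derivative-of-$\Ad$ formula $d(\Ad_g\eta)(v)=\ad(\chi(v))(\Ad_g\eta)$, obtained by factoring $g(t)=\bigl(g(t)g(x)^{-1}\bigr)g(x)$ and differentiating at the identity, followed by the Leibniz rule for the connection. The only difference is presentational --- the paper argues invariantly by evaluating $\tilde\nabla^{(j)}_U(\Ad_g\circ\chi^{(j)})$ on vector fields $U,U_1,\dots,U_j$ via the Koszul formula for the tensor-product connection, whereas you expand in a chart and a basis of $\mathfrak g$ using $\nabla^{(j)}=\textrm{\normalfont d}+\Gamma^{(j)}$ --- and your attention to the placement of the new cotangent slot is consistent with the paper's convention in this lemma.
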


\begin{proof}
Let $U,U_1,\dots,U_j\in\mathfrak X(X)$, and recall that we denote by $\nabla^X$ the linear connection on the tangent bundle, $TX$. We pick a curve $\gamma:(-\epsilon,\epsilon)\to X$ such that $\gamma(0)=x$ and $\gamma'(0)=U(x)$. For the sake of brevity, we denote by $\varphi:X\to\mathfrak g$ the map defined as $x\mapsto\chi^{(j)}(x)(U_1(x),\dots,U_j(x))$. Therefore,
\begin{multline*}
U\left(\Ad_g(\varphi)\right)(x)=\left.\frac{d}{dt}\right|_{t=0}\Ad_{(g\circ\gamma)}\left(\varphi\circ\gamma\right)(t)=\left.\frac{d}{dt}\right|_{t=0}\Ad_{(g\circ\gamma)g(x)^{-1}}\left(\Ad_{g(x)}\left(\varphi\circ\gamma\right)\right)(t)\\
=\ad\left(\chi(x)\left(U(x)\right)\right)\left(\Ad_{g(x)}\left(\varphi(x)\right)\right)+\Ad_{g(x)}\left((d\varphi)_x(U(x))\right)\\
=\ad\left(\chi(x)\left(U(x)\right)\right)\left(\Ad_{g(x)}\left(\varphi(x)\right)\right)+\Ad_{g(x)}\left(U(x)\left(\varphi(x)\right)\right).
\end{multline*}
As a result, we get
\begin{equation*}
U\left(\Ad_g(\varphi)\right)=\ad\left(\chi(U)\right)\left(\Ad_g(\varphi)\right)+\Ad_g(U(\varphi)).
\end{equation*}
By using this, we conclude
\begin{multline*}
\tilde\nabla_U^{(j)}\left(\Ad_g\circ\chi^{(j)}\right)(U_1,\dots,U_j)\\
=U\left(\left(\Ad_g\circ\chi^{(j)}\right)(U_1,\dots,U_j)\right)-\sum_{i=1}^j\left(\Ad_g\circ\chi^{(j)}\right)\left(U_1,\dots,\nabla_U^X U_i,\dots,U_j\right)\\
=\ad\left(\chi(U)\right)\left(\Ad_g(\varphi)\right)+\Ad_g(U(\varphi))-\Ad_g\left(\sum_{i=1}^j\chi^{(j)}\left(U_1,\dots,\nabla_U^X U_i,\dots,U_j\right)\right)\\
=\ad\left(\chi(U)\right)\left(\Ad_g(\varphi)\right)+\Ad_g\left(U(\varphi)-\sum_{i=1}^j\chi^{(j)}\left(U_1,\dots,\nabla_U^X U_i,\dots,U_j\right)\right)\\
=\ad(\chi(U))\left(\left(\Ad_g\circ\chi^{(j)}\right)(U_1,\dots,U_j)\right)+\Ad_g\left(\tilde\nabla_U^{(j)}\chi^{(j)}(U_1,\dots,U_j)\right).
\end{multline*}
\end{proof}

Recall that for each $k,r\in\mathbb Z^+$ there is an injective map $J^{k+r}(X,G)\hookrightarrow J^k\left(J^r(X,G)\right), j_x^{k+r}g\mapsto j_x^k\left(j^r g\right)$. This allows us to identify
\begin{equation}\label{eq:injection}
J^k(X,G)\ni j_x^k g\simeq j_x^1(j^{k-1}g)\in J^1\left(J^{k-1}(X,G)\right).
\end{equation}

\begin{theorem}
Let $x\in X$ and $\left(g,\xi^{(1)},\dots,\xi^{(k)}\right),\left(h,\eta^{(1)},\dots,\eta^{(k)}\right)\in J_x^k(X,G)$. Then their fibered product is given by $\left(g,\xi^{(1)},\dots,\xi^{(k)}\right)\left(h,\eta^{(1)},\dots,\eta^{(k)}\right)=\left(gh,\zeta^{(1)},\dots,\zeta^{(k)}\right)$, where
\begin{equation}\label{eq:multiplicationelements}
\zeta^{(j)}=\xi^{(j)}+\sum_{\lambda\in\mathcal P^a(j)}\ad\left(\xi^{({j_{l-1}})}\right)\dots\ad\left(\xi^{({j_1})}\right)\left(\Ad_g\circ\eta^{(j_l)}\right),\qquad 1\leq j\leq k.
\end{equation}
\end{theorem}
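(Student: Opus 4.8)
The plan is to argue by induction on $j$, using that the trivialized product $\left(gh,\zeta^{(1)},\dots,\zeta^{(k)}\right)$ is, by definition of the fibered multiplication, the image of $j_x^k(gh)$ under \eqref{eq:injectionhigherorderjet}. Consequently its components satisfy the very recursion that defines the trivialization: $\zeta^{(1)}=\left(dR_{(gh)^{-1}}\right)_{gh}\circ d(gh)$ and $\zeta^{(j)}=\tilde\nabla^{(j-1)}\zeta^{(j-1)}$ for $2\le j\le k$. It therefore suffices to verify that this recursion reproduces the partition sum in \eqref{eq:multiplicationelements}.

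For the base case $j=1$, I would compute the right logarithmic derivative of a product. Writing $\delta^R f=\left(dR_{f^{-1}}\right)_f\circ df$, the Leibniz rule for the group multiplication gives $\delta^R(gh)=\delta^R g+\Ad_g\circ\delta^R h$, that is, $\zeta^{(1)}=\xi^{(1)}+\Ad_g\circ\eta^{(1)}$. Since $\mathcal P^a(1)=\{(\{1\})\}$ contributes the single summand $\Ad_g\circ\eta^{(1)}$ (the $\ad$-prefix being empty), this is exactly \eqref{eq:multiplicationelements} for $j=1$.

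For the inductive step I would apply $\tilde\nabla^{(j)}$ to $\zeta^{(j)}$. The leading term gives $\tilde\nabla^{(j)}\xi^{(j)}=\xi^{(j+1)}$, and for each $\lambda=(\lambda_1,\dots,\lambda_l)\in\mathcal P^a(j)$ I differentiate the summand $T_\lambda=\ad(\xi^{(j_{l-1})})\cdots\ad(\xi^{(j_1)})\big(\Ad_g\circ\eta^{(j_l)}\big)$ by the Leibniz rule for the tensor-product connection, the $\mathfrak g$-factors being constant. Differentiating an $\ad(\xi^{(j_s)})$ factor ($1\le s\le l-1$) replaces $\xi^{(j_s)}$ by $\tilde\nabla^{(j_s)}\xi^{(j_s)}=\xi^{(j_s+1)}$, whereas differentiating the innermost factor $\Ad_g\circ\eta^{(j_l)}$ is governed by Lemma \ref{lemma:nabladjoint}, producing the two contributions $\Ad_g\circ\eta^{(j_l+1)}$ and $\ad(\xi^{(1)})\big(\Ad_g\circ\eta^{(j_l)}\big)$. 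The point is that attaching the new derivative index to the block $\lambda_s$ realizes the operation $\lambda\mapsto\lambda_{[s]}$ of Section \ref{sec:prelim}: for $1\le s\le l-1$ one obtains $T_{\lambda_{[s]}}$ (block $s$ grows to size $j_s+1$), differentiating $\eta^{(j_l)}$ yields $T_{\lambda_{[l]}}$, and the extra $\ad(\xi^{(1)})$ from Lemma \ref{lemma:nabladjoint} inserts an innermost factor and reproduces $T_{\lambda_{[0]}}$, the partition with a new singleton block $\{1\}$ prepended. By Proposition \ref{prop:disjointunionantilexicographically} the partitions $\lambda_{[s]}$, with $\lambda\in\mathcal P^a(j)$ and $0\le s\le l$, exhaust $\mathcal P^a(j+1)$ bijectively, so the differentiated terms assemble into $\xi^{(j+1)}+\sum_{\mu\in\mathcal P^a(j+1)}T_\mu$, which is \eqref{eq:multiplicationelements} for $j+1$.

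I expect the main obstacle to be the combinatorial bookkeeping in this last step: one must confirm that the block-size profile of each $\lambda_{[s]}$ matches the $\ad$-word produced by the Leibniz differentiation — in particular that the $\ad(\xi^{(1)})$ supplied by Lemma \ref{lemma:nabladjoint} occupies the innermost position demanded by $\lambda_{[0]}=(\{1\},\hat\lambda_1,\dots,\hat\lambda_l)$ — and that distinct partitions yielding the same $\ad$-word account correctly for the multiplicities (such as the coefficient $2$ already visible at $j=3$). It is precisely here that the anti-lexicographic normalization, which selects one canonical representative per $\ad$-word, does the essential work of turning the Leibniz expansion into the clean sum \eqref{eq:multiplicationelements}.
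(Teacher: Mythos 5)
Your proposal is correct and follows essentially the same route as the paper: the base case is the right-logarithmic-derivative Leibniz identity $\zeta^{(1)}=\xi^{(1)}+\Ad_g\circ\eta^{(1)}$, and the inductive step applies $\tilde\nabla^{(j)}$ to the partition sum, using the Leibniz rule together with Lemma \ref{lemma:nabladjoint} to match the three kinds of differentiated terms with $\lambda_{[s]}$ for $1\le s\le l-1$, $\lambda_{[l]}$, and $\lambda_{[0]}$, and then invokes Proposition \ref{prop:disjointunionantilexicographically} to reassemble the sum over $\mathcal P^a(j+1)$. The multiplicity worry you raise at the end is moot, since the sum is indexed by partitions rather than by $\ad$-words, and the disjoint-union decomposition already accounts for repetitions.
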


\begin{proof}
We prove it by induction in $k$. For $k=1$, consider two (local) functions $\tilde g,\tilde h:X\to G$ such that $\tilde g(x)=g$, $\tilde h(x)=h$, $\left(dR_{g^{-1}}\right)_g\circ(d\tilde g)_x=\xi^{(1)}$ and $\left(dR_{h^{-1}}\right)_h\circ(d\tilde h)_x=\eta^{(1)}$. Recall that $\left(g,\xi^{(1)}\right)\left(h,\eta^{(1)}\right)\simeq j_x^1(\tilde g\tilde h)$. By using this, \eqref{eq:injectionhigherorderjet} and the Leibniz rule, it can be checked that
\begin{equation*}
\left(g,\xi^{(1)}\right)\left(h,\eta^{(1)}\right)=\left(gh,\left(dR_{(gh)^{-1}}\right)_{gh}\circ d\left(\tilde g\tilde h\right)_x\right)=\left(gh,\xi^{(1)}+\Ad_g\circ\eta^{(1)}\right)=\left(gh,\zeta^{(1)}\right).
\end{equation*}

Now, given $k>1$, we assume that the result holds for $k-1$ and we pick a (local) function $\tilde g:X\to G$ such that $j_x^k\tilde g\simeq\left(g,\xi^{(1)},\dots,\xi^{(k)}\right)$ via \eqref{eq:injectionhigherorderjet}. Moreover, we have $j^{k-1}\tilde g\simeq\left(\tilde g,\tilde\xi^{(1)},\dots,\tilde\xi^{(k-1)}\right)$. In the same vein, we pick a (local) function $\tilde h:X\to G$ such that $j_x^k\tilde h\simeq\left(h,\eta^{(1)},\dots,\eta^{(k)}\right)$. By the induction hypothesis, $\left(j^{k-1}\tilde g\right)\left(j^{k-1}\tilde h\right)\simeq\left(\tilde g\tilde h,\tilde\zeta^{(1)},\dots,\tilde\zeta^{(k-1)}\right)$. Thanks to this and \eqref{eq:injection}, we may write
\begin{equation*}
\left(g,\xi^{(1)},\dots,\xi^{(k)}\right)\left(h,\eta^{(1)},\dots,\eta^{(k)}\right)\simeq j_x^1\left(\left(j^{k-1}\tilde g\right)\left(j^{k-1}\tilde h\right)\right)\simeq\left(gh,\zeta^{(1)},\dots,\zeta^{(k-1)},\tilde\nabla^{(k-1)}\tilde\zeta^{(k-1)}(x)\right)
\end{equation*}
By recalling that $\tilde\nabla^{(j)}\tilde\xi^{(j)}(x)=\xi^{(j+1)}$, $1\leq j\leq k-1$, and analogous for $\tilde\eta^{(j)}$, and by using Proposition \ref{prop:disjointunionantilexicographically}, we finish
\begin{multline*}
\tilde\nabla^{(k-1)}\tilde\zeta^{(k-1)}(x)=\\
\tilde\nabla^{(k-1)}\left(\tilde\xi^{(k-1)}+\sum_{\lambda\in\mathcal P^a(k-1)}\ad\left(\tilde\xi^{({j_{l-1}})}\right)\dots\ad\left(\tilde\xi^{({j_1})}\right)\left(\Ad_{\tilde g}\circ\tilde\eta^{(j_l)}\right)\right)(x)\\
=\xi^{(k)}+\sum_{\lambda\in\mathcal P^a(k-1)}\sum_{s=1}^{l-1}\ad\left(\xi^{({j_{l-1}})}\right)\dots\ad\left(\xi^{(j_s+1)}\right)\dots\ad\left(\xi^{({j_1})}\right)\left(\Ad_g\circ\eta^{(j_l)}\right)\\
+\sum_{\lambda\in\mathcal P^a(k-1)}\ad\left(\xi^{({j_{l-1}})}\right)\dots\ad\left(\xi^{({j_1})}\right)\left(\ad\left(\xi^{(1)}\right)\left(\Ad_g\circ\eta^{(j_l)}\right)+\Ad_g\circ\eta^{(j_l+1)}\right)\\
=\xi^{(k)}+\sum_{\lambda\in\mathcal P^a(k)}\ad\left(\xi^{({j_{l-1}})}\right)\dots\ad\left(\xi^{({j_1})}\right)\left(\Ad_g\circ\eta^{(j_l)}\right)=\zeta^{(k)}.
\end{multline*}
For the second equation, we have used that for each $j\in\mathbb Z^+$ and $j_1,j_2\in\mathbb Z^+$ such that $j_1+j_2=j$ we have 
\begin{multline}\label{eq:leibnizrule}
\tilde\nabla^{(j)}\ad\left(\chi^{(j_1)}\right)\left(\Ad_g\circ\chi^{(j_2)}\right)=\\
\ad\left(\tilde\nabla^{(j_1)}\chi^{(j_1)}\right)\left(\Ad_g\circ\chi^{(j_2)}\right)+\ad\left(\chi^{(j_1)}\right)\left(\ad\left(\chi\right)\left(\Ad_g\circ\chi^{(j_2)}\right)+\Ad_g\circ\tilde\nabla^{(j_2)}\chi^{(j_2)}\right)
\end{multline}
for each (local) section $\chi^{(j_i)}\in\Gamma\left(\bigotimes^{j_i}T^*X\otimes\mathfrak g\to X\right)$, $i=1,2$, and each (local) function $g:X\to G$, where $\chi=\left(dR_{g^{-1}}\right)_g\circ dg\in\Gamma\left(T^*X\otimes\mathfrak g\to X\right)$. This is a straightforward consequence of the Leibniz rule, Lemma \ref{lemma:nabladjoint} and the associativity of the tensor product of linear connections, i.e., $\tilde\nabla^{(j)}=\tilde\nabla^{(j_1)}\otimes\tilde\nabla^{(j_2)}$.
\end{proof}

Observe that the identity element is given by $\left(e,0,\dots,0\right)\in J^k(X,G)$, where $e\in G$ is the identity element.

\begin{corollary}
Let $\left(g,\xi^{(1)},\dots,\xi^{(k)}\right)\in J^k(X,G)$. Then its inverse is given by $\left(g,\xi^{(1)},\dots,\xi^{(k)}\right)^{-1}=\left(g^{-1},\omega^{(1)},\dots,\omega^{(k)}\right)$, where
\begin{equation}\label{eq:inverselement}
\omega^{(j)}=\sum_{\lambda\in\mathcal P^a(j)}(-1)^l~\Ad_{g^{-1}}\circ\left(\ad\left(\xi^{(j_1)}\right)\dots\ad\left(\xi^{(j_{l-1})}\right)\left(\xi^{(j_l)}\right)\right),\qquad 1\leq j\leq k.
\end{equation}
\end{corollary}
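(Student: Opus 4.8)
The plan is to verify the claimed formula against the multiplication rule \eqref{eq:multiplicationelements} rather than to differentiate the pointwise-inverse function $y\mapsto\tilde g(y)^{-1}$ directly. Because the source and target of the groupoid both equal $\pi_{J^k(X,G),X}$, each fiber $J_x^k(X,G)$ is an honest group, the assignment $g\mapsto j_x^k g$ being a surjective homomorphism for pointwise multiplication; hence it suffices to show that $\left(g^{-1},\omega^{(1)},\dots,\omega^{(k)}\right)$ is a \emph{right} inverse. Thus I would substitute $h=g^{-1}$ and $\eta^{(j)}=\omega^{(j)}$ into \eqref{eq:multiplicationelements} and prove that the resulting components $\zeta^{(j)}$ vanish for every $1\le j\le k$, using throughout that $\Ad_g\circ\Ad_{g^{-1}}=\textrm{\normalfont id}_{\mathfrak g}$.

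First I would set up an induction on $k$. Since the right-hand side of \eqref{eq:multiplicationelements} for a given index $j$ involves only $g$ and tensors of order $\le j$, the components $\zeta^{(1)},\dots,\zeta^{(k-1)}$ are literally the ones computed inside $J^{k-1}(X,G)$ and hence vanish by the inductive hypothesis; the base case $k=1$ is the one-line check $\zeta^{(1)}=\xi^{(1)}+\Ad_g\circ\omega^{(1)}=\xi^{(1)}-\xi^{(1)}=0$. The whole statement therefore reduces to the single equality $\zeta^{(k)}=0$. In that expression the single-block partition $(\{1,\dots,k\})$ contributes $\Ad_g\circ\omega^{(k)}$, and \emph{its} own single-block term equals $-\xi^{(k)}$, which cancels the leading $\xi^{(k)}$ of \eqref{eq:multiplicationelements}. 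Equivalently, isolating that term shows that the requirement $\zeta^{(j)}=0$ defines $\omega^{(j)}$ recursively: every remaining partition has length $l\ge 2$, hence last block of size $j_l\le j-1$, so it feeds in only the already-known lower-order $\omega^{(j_l)}$, and one is left to check that this recursion is solved by the closed form \eqref{eq:inverselement}.

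Carrying out that check is the combinatorial core and the main obstacle. Substituting \eqref{eq:inverselement} for the lower-order $\omega^{(j_l)}$ turns $\zeta^{(k)}$ into a double sum indexed by an outer partition $\lambda\in\mathcal P^a(k)$ of length $l\ge 2$ and an inner partition $\nu\in\mathcal P^a(j_l)$ of length $l'$ with block sizes $n_1,\dots,n_{l'}$, each summand being a signed iterated bracket $\ad(\xi^{(j_{l-1})})\cdots\ad(\xi^{(j_1)})\,\ad(\xi^{(n_1)})\cdots\ad(\xi^{(n_{l'-1})})(\xi^{(n_{l'})})$. I would reorganize this sum exactly as in the proof of \eqref{eq:multiplicationelements}: represent the data by holonomic jets, write $\zeta^{(k)}=\tilde\nabla^{(k-1)}\tilde\zeta^{(k-1)}(x)$, expand with the Leibniz-type identity \eqref{eq:leibnizrule} and Lemma \ref{lemma:nabladjoint}, and then repackage the terms by means of the recursive decomposition $\mathcal P^a(k)=\bigsqcup_{\lambda\in\mathcal P^a(k-1)}\bigsqcup_{s=0}^l\{\lambda_{[s]}\}$ of Proposition \ref{prop:disjointunionantilexicographically}. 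The delicate part is purely bookkeeping: one must track the two opposite orderings of the $\ad$-factors (descending in \eqref{eq:multiplicationelements}, ascending in \eqref{eq:inverselement}) and the alternating sign $(-1)^l$, so that each non-vanishing contribution is matched with its negative; equivalently, one exhibits a sign-reversing involution on the pairs $(\lambda,\nu)$. Once Proposition \ref{prop:disjointunionantilexicographically} is aligned with this two-level partition structure, the double sum collapses to zero, yielding $\zeta^{(k)}=0$ and hence the formula \eqref{eq:inverselement}.
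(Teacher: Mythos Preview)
Your overall strategy---verify that $\left(g^{-1},\omega^{(1)},\dots,\omega^{(k)}\right)$ is a right inverse by substituting into \eqref{eq:multiplicationelements}---is sound and genuinely different from the paper's proof. The paper never touches the multiplication formula: it uses that the groupoid inverse of $j_x^k\tilde g$ is $j_x^k(\tilde g^{-1})$, so the $k$-th trivialized component of the inverse is literally $\tilde\nabla^{(k-1)}\tilde\omega^{(k-1)}(x)$, and then it differentiates the formula for $\tilde\omega^{(k-1)}$ (known by induction) using Lemma \ref{lemma:nabladjoint} and Proposition \ref{prop:disjointunionantilexicographically} to obtain $\omega^{(k)}$. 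Your reduction to the purely algebraic identity ``$\zeta^{(k)}=0$'' is correct and the observation that this determines $\omega^{(k)}$ recursively from the lower $\omega^{(j_l)}$ is exactly right.

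The gap is in your last paragraph, where you propose to handle the combinatorial core by ``representing the data by holonomic jets'' and writing $\zeta^{(k)}=\tilde\nabla^{(k-1)}\tilde\zeta^{(k-1)}(x)$. That step is circular. The derivation of the multiplication formula in the Theorem uses the relations $\tilde\nabla^{(j)}\tilde\eta^{(j)}=\tilde\eta^{(j+1)}$ for the second factor; with $\tilde\eta^{(j)}=\tilde\omega^{(j)}$ defined by \eqref{eq:inverselement}, the needed relation is $\tilde\nabla^{(j)}\tilde\omega^{(j)}=\tilde\omega^{(j+1)}$, and that is precisely the content of the paper's proof (equivalently, the statement that \eqref{eq:inverselement} really is the trivialization of $j^k(\tilde g^{-1})$). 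If you instead choose $\tilde h=\tilde g^{-1}$ so that $\tilde\zeta^{(k-1)}\equiv 0$ as a section, then you do get $\zeta^{(k)}=0$ for free---but only with $\eta^{(k)}$ equal to the unknown $k$-th component $\bar\omega^{(k)}$ of $j_x^k(\tilde g^{-1})$, not with the claimed $\omega^{(k)}$; you are back to having to identify $\bar\omega^{(k)}$ with $\omega^{(k)}$. So the jet manoeuvre does not help you avoid the combinatorics. Your alternative suggestion of a sign-reversing involution on the pairs $(\lambda,\nu)$ is the honest way to finish your approach, but you have not constructed it; until you do, the proof is incomplete.
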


\begin{proof}
For $k=1$, given $\left(g,\xi^{(1)}\right)\in J^1(X,G)$, it is easy to check using \eqref{eq:multiplicationelements} that
\begin{equation*}
\left(g,\xi^{(1)}\right)\left(g^{-1},-\Ad_{g^{-1}}\circ\xi^{(1)}\right)=\left(e,0\right),
\end{equation*}
whence $\left(g,\xi^{(1)}\right)^{-1}=\left(g^{-1},-\Ad_{g^{-1}}\circ\xi^{(1)}\right)$.

Now, given $k>1$, we assume that the result holds for $k-1$. Let $\tilde g:X\to G$ be a (local) function such that $j_x^k\tilde g\simeq\left(g,\xi^{(1)},\dots,\xi^{(k)}\right)$ via \eqref{eq:injectionhigherorderjet}. Thanks to \eqref{eq:injection} and the induction hypothersis, we may write 
\begin{equation*}
\left(g,\xi^{(1)},\dots,\xi^{(k)}\right)^{-1}\simeq\left(j_x^k\tilde g\right)^{-1}\simeq j_x^1\left(j^{k-1}\tilde g\right)^{-1}=j_x^1\left(\tilde g^{-1},\tilde\omega^{(1)},\dots,\tilde\omega^{(k-1)}\right).
\end{equation*} Hence,
\begin{equation*}
\left(g,\xi^{(1)},\dots,\xi^{(k)}\right)^{-1}=\left(g^{-1},\omega^{(1)},\dots,\omega^{(k-1)},\left(\tilde\nabla^{(k-1)}\tilde\omega^{(k-1)}\right)(x)\right).
\end{equation*}
By recalling that $\left(\tilde\nabla^{(j)}\tilde\xi^{(j)}\right)(x)=\xi^{(j+1)}$, $1\leq j\leq k-1$, and by using Proposition \ref{prop:disjointunionantilexicographically}, we conclude:
\begin{multline*}
\left(\tilde\nabla^{(k-1)}\tilde\omega^{(k-1)}\right)(x)=\tilde\nabla^{(k-1)}\left(\sum_{\lambda\in\mathcal P^a(k-1)}(-1)^l~\Ad_{\tilde g^{-1}}\circ\left(\ad\left(\tilde\xi^{(j_1)}\right)\dots\ad\left(\tilde\xi^{(j_{l-1})}\right)\left(\tilde\xi^{(j_l)}\right)\right)\right)(x)\\
=\sum_{\lambda\in\mathcal P^a(k-1)}(-1)^{l+1}~\ad\left(\Ad_{g^{-1}}\circ\xi^{(1)}\right)\left(\Ad_{g^{-1}}\circ\left(\ad\left(\xi^{(j_1)}\right)\dots\ad\left(\xi^{(j_{l-1})}\right)\left(\xi^{(j_l)}\right)\right)\right)\\
+\sum_{\lambda\in\mathcal P^a(k-1)}(-1)^l~\Ad_{g^{-1}}\circ\sum_{s=1}^{l-1}\left(\ad\left(\xi^{(j_1)}\right)\dots\ad\left(\xi^{(j_s+1)}\right)\dots\ad\left(\xi^{(j_{l-1})}\right)\left(\xi^{(j_l)}\right)\right)\\
+\sum_{\lambda\in\mathcal P^a(k-1)}(-1)^l~\Ad_{g^{-1}}\circ\left(\ad\left(\xi^{(j_1)}\right)\dots\ad\left(\xi^{(j_{l-1})}\right)\left(\xi^{(j_l+1)}\right)\right)\\
=\sum_{\lambda\in\mathcal P^a(k)}(-1)^l\Ad_{g^{-1}}\circ\left(\ad\left(\xi^{(j_1)}\right)\dots\ad\left(\xi^{(j_{l-1})}\right)\left(\xi^{(j_l)}\right)\right)=\omega^{(k)},
\end{multline*}
where we have used \eqref{eq:leibnizrule} and the fact that $\ad\left(\Ad_h(\xi)\right)\left(\Ad_h(\eta)\right)=\Ad_h\left(\ad(\xi)(\eta)\right)$ for each $h\in G$ and $\xi,\eta\in\mathfrak g$.
\end{proof}

Thanks to Proposition \ref{prop:amountapartitions}, we may rewrite \eqref{eq:multiplicationelements} as 
\begin{equation*}
\zeta^{(j)}=\xi^{(j)}+\sum_{l=1}^j~\sum_{j_1+\dots+j_l=j}N(j_1,\dots,j_l)~\ad\left(\xi^{({j_{l-1}})}\right)\dots\ad\left(\xi^{({j_1})}\right)\left(\Ad_g\circ\eta^{(j_l)}\right),\qquad 1\leq j\leq k.
\end{equation*}
Similarly, \eqref{eq:inverselement} may be expressed as
\begin{equation*}
\omega^{(j)}=\sum_{l=1}^j~\sum_{j_1+\dots+j_l=j}~(-1)^l~N(j_1,\dots,j_l)~\Ad_{g^{-1}}\circ\left(\ad\left(\xi^{(j_1)}\right)\dots\ad\left(\xi^{(j_{l-1})}\right)\left(\xi^{(j_l)}\right)\right),\qquad 1\leq j\leq k.
\end{equation*}

\begin{example}[Second order jet]
Let us compute the explicit expressions for the case $k=2$. Let $\left(g,\xi^{(1)},\xi^{(2)}\right),\left(h,\eta^{(1)},\eta^{(2)}\right)\in J_x^2(X,G)$ and denote by $[\cdot,\cdot]$ the Lie bracket on $\mathfrak g$. An easy check from \eqref{eq:multiplicationelements} yields
\begin{equation*}
\left(g,\xi^{(1)},\xi^{(2)}\right)\left(h,\eta^{(1)},\eta^{(2)}\right)=\left(gh,\xi^{(1)}+\Ad_g\circ\eta^{(1)},\xi^{(2)}+\Ad_g\circ\eta^{(2)}+\left[\xi^{(1)},Ad_g\circ\eta^{(1)}\right]\right).
\end{equation*}
In the same vein, from \eqref{eq:inverselement} we obtain
\begin{equation*}
\left(g,\xi^{(1)},\xi^{(2)}\right)^{-1}=\left(g^{-1},-\,Ad_{g^{-1}}\circ\xi^{(1)},-\Ad_{g^{-1}}\circ\xi^{(2)}+\Ad_{g^{-1}}\circ\left[\xi^{(1)},\xi^{(1)}\right]\right).
\end{equation*}
\end{example}

\section{Local expression of the trivialization}\label{sec:coordinated}

In this section, we work in a coordinated chart $(U,x)$ of $X$, where we write $x=(x^\mu)=(x^1,\dots,x^n)$. This enables us to (locally) identify $X=\mathbb R^n$ and, hence, $\bigotimes^k T^*X=X\times\bigotimes^k\mathbb R^n$. Note that sections of that bundle are of the form $\xi^{(k)}=\left(\textrm{\normalfont id}_X,\hat\xi^{(k)}\right)$ for some function $\hat\xi^{(k)}:X\to\bigotimes^k\mathbb R^n$. By abusing the notation, we identify $\xi^{(k)}=\hat\xi^{(k)}$. This way, if we pick the canonical flat connection on the trivial bundle $T^*X=X\times\mathbb R^n\to X$, then we have $\nabla^{(k)}\alpha=d\alpha$ for each $\alpha\in\Gamma\left(X\times\bigotimes^k\mathbb R^n\to X\right)$.

\begin{example}\label{example}
Let $X=\mathbb R$ and denote by $t\in\mathbb R$ its global coordinate. In this case, a function $g:\mathbb R\to G$ is a just curve on $G$ and, thus, its derivative, $dg:T\mathbb R\to TG$, is uniquely determined by its velocity, $\dot g:\mathbb R\to g^*(TG)$. Namely, for each $t\in\mathbb R$ we have
\begin{equation*}
(dg)_t:T_t\mathbb R\simeq\mathbb R\to T_{g(t)}G,\qquad v\mapsto(dg)_t(v)=\dot g(t)v.
\end{equation*}
By using the velocity instead of the derivative, we get a map analogous to \eqref{eq:injectionhigherorderjet},
\begin{equation*}
J^k(\mathbb R,G)\to G\times k\mathfrak g,\qquad j_t^k g\mapsto\left(g(t),\xi^{(1)}(t),\dots,\xi^{(k)}(t)\right),
\end{equation*}
where $k\mathfrak g=\mathfrak g\oplus\overset{\underset{\smile}{k}}{\dots}\oplus\mathfrak g$ and
\[\xi^{(1)}(t)=\left(dR_{g(t)^{-1}}\right)_{g(t)}\left(\dot g(t)\right)\in\mathfrak g,
\]
\[
\xi^{(j)}(t)=\dot\xi^{(j-1)}(t)\in\mathfrak g, \quad 2\leq j\leq k.
\]
Note that the information about the base point $t\in\mathbb R$ is lost and, subsequently, the new map is not injective anymore. Nevertheless, if we restrict ourselves to a single fiber ---for instance, the fiber over $t=0$--- we obtain an isomorphism of bundles over $G$: $J_0^k(\mathbb R,G)\simeq G\times k\mathfrak g$. The group structure of $J_0^k(\mathbb R,G)$ under this identification was originally investigated in \cite{Vi2013}.
\end{example}

Additionally, we now assume that we are working with a matrix group, i.e. $G\subset GL(N,\mathbb R)$ and $N\in\mathbb Z^+$. Hence, $(dR_h)_g(U_g)=U_gh$, is the matrix multiplication for each $g,h\in G$ and $U_g\in T_g G$. Let $g=(g_{ab})_{a,b=1}^N:X\to G$ be a (local) function, where $g_{ab}:X\to\mathbb R$, $1\leq a,b\leq N$. Then, for each multi-index $J$, we may consider the partial derivative of $g$ as the following (local) function 
\begin{equation*}
\frac{\partial^{|J|} g}{\partial x^J}=\left(\frac{\partial^{|J|} g_{ab}}{\partial x^J}\right)_{a,b=1}^N:X\to\mathfrak{gl}(N,\mathbb R).
\end{equation*}
Subsequently, the tangent map of $g$ is given by $dg=(\partial g/\partial x^\mu) dx^\mu$, and analogous for higher order tangent maps: $d^2 g\equiv d(dg)=(\partial^2 g/\partial x^\mu\partial x^\nu)dx^\nu\otimes dx^\mu$, and so on. In order to keep the notation simple, it will be useful to define the maps
\begin{equation*}
\xi_{\mu_1\dots\mu_j}=\frac{\partial^j g}{\partial x^{\mu_1}\dots\partial x^{\mu_j}}g^{-1}:X\to \mathfrak{gl}(N,\mathbb R),\qquad1\leq\mu_1,\dots,\mu_j\leq n,\quad j\in\mathbb Z^+.
\end{equation*}

Observe that $\xi_{\mu_{\sigma_1(1)}\dots\mu_{\sigma_j(j)}}=\xi_{\mu_1\dots\mu_j}$ for any permutation $\sigma=(\sigma_1,\dots,\sigma_j)\in\mathcal S_j$. Hence, we can always rearrange the indices to be in ascending order. By the Leibniz's rule, it is easily checked that
\begin{equation}\label{eq:partialderivativesxi}
\frac{\partial\xi_{\mu_1\dots\mu_j}}{\partial x^\nu}=\xi_{\mu_1\dots\mu_j\nu}-\xi_{\mu_1\dots\mu_j}\xi_\nu,\qquad 1\leq\mu_1,\dots,\mu_j,\nu\leq n,\quad j\in\mathbb Z^+.
\end{equation}
Lastly, given $1\leq\mu_1,\dots,\mu_j\leq n$ and a subset $\kappa=\{\kappa_1,\dots,\kappa_r\}\subset\{1,\dots,j\}$, $1\leq r\leq j$, we denote
\begin{equation*}
\xi_{\kappa(\mu_1\dots\mu_j)}=\xi_{\mu_{\kappa_1}\dots\mu_{\kappa_r}}.
\end{equation*}
This way, given a partition $\lambda=(\lambda_1,\dots,\lambda_l)\in\mathcal P(j)$ we denote
\begin{equation*}
\xi_{\lambda(\mu_1\dots\mu_j)}=\prod_{i=1}^l\xi_{\lambda_i(\mu_1\dots\mu_j)}=\xi_{\lambda_1(\mu_1\dots\mu_j)}\dots\xi_{\lambda_l(\mu_1\dots\mu_j)}.
\end{equation*}

\begin{lemma}\label{lemma:partialxi}
For each $k\in\mathbb Z^+$, $k\geq 2$, $1\leq\mu_1,\dots,\mu_k\leq n$ and $\lambda\in\mathcal P_1^+(k-1)$ we have
\begin{equation*}
\frac{\partial}{\partial x^{\mu_k}}\xi_{\lambda(\mu_1\dots\mu_{k-1})}=\sum_{s=1}^l\left(\xi_{\lambda_{[s]}^+(\mu_1\dots\mu_k)}-\xi_{\lambda_{[s]}^-(\mu_1\dots\mu_k)}\right).
\end{equation*}
\end{lemma}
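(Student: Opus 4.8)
The plan is to reduce the whole statement to the single-block identity \eqref{eq:partialderivativesxi} by means of the Leibniz rule. Since, by definition, $\xi_{\lambda(\mu_1\dots\mu_{k-1})}=\prod_{i=1}^l\xi_{\lambda_i(\mu_1\dots\mu_{k-1})}$ is an \emph{ordered} product of $\mathfrak{gl}(N,\mathbb R)$-valued functions, I would first differentiate it with the product rule,
\begin{equation*}
\frac{\partial}{\partial x^{\mu_k}}\xi_{\lambda(\mu_1\dots\mu_{k-1})}=\sum_{s=1}^l\xi_{\lambda_1(\mu_1\dots\mu_{k-1})}\cdots\left(\frac{\partial}{\partial x^{\mu_k}}\xi_{\lambda_s(\mu_1\dots\mu_{k-1})}\right)\cdots\xi_{\lambda_l(\mu_1\dots\mu_{k-1})},
\end{equation*}
keeping the factors in their original left-to-right order, which matters because matrix multiplication is noncommutative.

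Next I would apply \eqref{eq:partialderivativesxi} to each differentiated factor. Writing $\lambda_s=\{\alpha_1^s,\dots,\alpha_{j_s}^s\}$, so that $\xi_{\lambda_s(\mu_1\dots\mu_{k-1})}=\xi_{\mu_{\alpha_1^s}\dots\mu_{\alpha_{j_s}^s}}$, identity \eqref{eq:partialderivativesxi} with $\nu=\mu_k$ gives
\begin{equation*}
\frac{\partial}{\partial x^{\mu_k}}\xi_{\mu_{\alpha_1^s}\dots\mu_{\alpha_{j_s}^s}}=\xi_{\mu_{\alpha_1^s}\dots\mu_{\alpha_{j_s}^s}\mu_k}-\xi_{\mu_{\alpha_1^s}\dots\mu_{\alpha_{j_s}^s}}\,\xi_{\mu_k}.
\end{equation*}
The first term is $\xi$ indexed by the set $\lambda_s\cup\{k\}$ (recall $k=(k-1)+1$ is the new, largest index), that is, by the $s$-th block of $\lambda_{[s]}^+$; the second term is the product $\xi_{\lambda_s(\mu_1\dots\mu_k)}\,\xi_{\{k\}(\mu_1\dots\mu_k)}$.

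The final step is to reassemble each summand into the product over all blocks. Substituting the first term into the $s$-th summand replaces the block $\lambda_s$ by $\lambda_s\cup\{k\}$ and leaves every other factor untouched, producing exactly $\xi_{\lambda_{[s]}^+(\mu_1\dots\mu_k)}$. Substituting the second term inserts the factor $\xi_{\{k\}(\mu_1\dots\mu_k)}$ immediately to the right of $\xi_{\lambda_s(\mu_1\dots\mu_k)}$, which is precisely the ordered product $\xi_{\lambda_{[s]}^-(\mu_1\dots\mu_k)}$ associated to inserting the singleton $\{k\}$ after $\lambda_s$. Hence the $s$-th summand equals $\xi_{\lambda_{[s]}^+(\mu_1\dots\mu_k)}-\xi_{\lambda_{[s]}^-(\mu_1\dots\mu_k)}$, and summing over $s$ yields the claim.

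There is no deep obstacle here: the argument is essentially bookkeeping built on \eqref{eq:partialderivativesxi}. The one point requiring care is the noncommutativity of matrix multiplication, which forces me to track the \emph{position} of each factor; in particular, the minus term of \eqref{eq:partialderivativesxi} must land immediately to the right of $\xi_{\lambda_s}$ in order to match the definition of $\lambda_{[s]}^-$, in which $\{k\}$ is inserted directly after $\lambda_s$. I would also remark that the hypothesis $\lambda\in\mathcal P_1^+(k-1)$ plays no role in the computation itself---the symmetry $\xi_{\mu_{\sigma_1(1)}\dots\mu_{\sigma_j(j)}}=\xi_{\mu_1\dots\mu_j}$ makes the formula independent of any ordering convention---but it guarantees, through the earlier lemma, that $\lambda_{[s]}^+$ and $\lambda_{[s]}^-$ again lie in $\mathcal P_1^+(k)$, which is what makes the identity useful in the later induction.
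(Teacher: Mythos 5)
Your proposal is correct and follows essentially the same route as the paper's proof: apply the product rule to the ordered product over the blocks, substitute \eqref{eq:partialderivativesxi} into each differentiated factor, and observe that the resulting plus and minus terms reassemble into $\xi_{\lambda_{[s]}^+(\mu_1\dots\mu_k)}$ and $\xi_{\lambda_{[s]}^-(\mu_1\dots\mu_k)}$ respectively. Your closing remarks on noncommutativity and on the role of the hypothesis $\lambda\in\mathcal P_1^+(k-1)$ are accurate but not needed for the argument.
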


\begin{proof}
It is a computation using \eqref{eq:partialderivativesxi}, as well as the definitions of $\lambda_{[s]}^+$ and $\lambda_{[s]}^-$. Indeed,
\begin{eqnarray*}
\frac{\partial}{\partial x^{\mu_k}}\xi_{\lambda(\mu_1\dots\mu_{k-1})}=\frac{\partial}{\partial x^{\mu_k}}\prod_{i=1}^l\xi_{\lambda_i(\mu_1\dots\mu_{k-1})}\\
=\sum_{s=1}^l\left(\left(\prod_{i=1}^{s-1}\xi_{\lambda_i(\mu_1\dots\mu_{k-1})}\right)\left(\frac{\partial}{\partial x^{\mu_k}}\xi_{\lambda_s(\mu_1\dots\mu_{k-1})}\right)\left(\prod_{i=s+1}^l\xi_{\lambda_i(\mu_1\dots\mu_{k-1})}\right)\right)\\
=\sum_{s=1}^l\left(\left(\prod_{i=1}^{s-1}\xi_{\lambda_i(\mu_1\dots\mu_{k-1})}\right)\left(\xi_{(\lambda_s\cup\{k\})(\mu_1\dots\mu_k)}-\xi_{\lambda_s(\mu_1\dots\mu_{k-1})}\xi_{\mu_k}\right)\left(\prod_{i=s+1}^l\xi_{\lambda_i(\mu_1\dots\mu_{k-1})}\right)\right)\\
=\sum_{s=1}^l\left(\xi_{\lambda_{[s]}^+(\mu_1\dots\mu_k)}-\xi_{\lambda_{[s]}^-(\mu_1\dots\mu_k)}\right).
\end{eqnarray*}
\end{proof}

We are ready to compute the coordinated expression of the injective morphism $J^k(X,G)\hookrightarrow G\times\bigoplus_{j=1}^k\left(\bigotimes^j T^*X\otimes\mathfrak g\right)$.

\begin{theorem}\label{theorem:explicitxi}
For each $k\in\mathbb Z^+$, the components of the map \eqref{eq:injectionhigherorderjet} with $\nabla$ being the canonical flat connection on the trivial bundle $T^*X=\mathbb R^n\times\mathbb R^n$ are given by
\begin{equation}\label{eq:componentsJk}
\xi^{(j)}(x)=\sum_{\lambda\in\mathcal P_1^+(j)}\epsilon(\lambda)~\xi_{\lambda(\mu_1\dots\mu_j)}(x)~dx^{\mu_1}\otimes\dots\otimes dx^{\mu_j},\qquad 1\leq j\leq k,
\end{equation}
where $\epsilon:\mathcal P_1^+(j)\to\{-1,1\}$ is defined by recurrence as follows: $\epsilon(\{1\})=1$, $\epsilon(\lambda_{[s]}^+)=\epsilon(\lambda)$ and $\epsilon(\lambda_{[s]}^-)=-\epsilon(\lambda)$ for each $j\in\mathbb Z^+$, $\lambda\in\mathcal P_1^+(j)$ and $1\leq s\leq j$.
\end{theorem}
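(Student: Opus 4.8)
The plan is to argue by induction on $k$, using the recursive definition $\xi^{(j)}=\tilde\nabla^{(j-1)}\xi^{(j-1)}$ from \eqref{eq:injectionhigherorderjet} together with the combinatorial results of Section \ref{sec:prelim}. For the base case $k=1$, I would note that for a matrix group $\left(dR_{g(x)^{-1}}\right)_{g(x)}\circ(dg)_x=(dg)_x\,g(x)^{-1}$, so in the chosen chart $\xi^{(1)}=\left(\partial g/\partial x^{\mu_1}\right)g^{-1}\,dx^{\mu_1}=\xi_{\mu_1}\,dx^{\mu_1}$. Since $\mathcal P_1^+(1)=\{(\{1\})\}$, $\xi_{(\{1\})(\mu_1)}=\xi_{\mu_1}$ and $\epsilon(\{1\})=1$, this is exactly \eqref{eq:componentsJk} for $j=1$.

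For the inductive step, assume \eqref{eq:componentsJk} holds at level $k-1$. Because we work with the canonical flat connection, $\tilde\nabla^{(k-1)}$ acts on a $\mathfrak g$-valued $(k-1)$-tensor simply by differentiating its component functions (recall $\nabla^{(k)}\alpha=d\alpha$, and $\tilde\nabla^{(k)}$ extends $\nabla^{(k)}$ trivially on the $\mathfrak g$-factor), with the new covariant index $\mu_k$ occupying the last tensor slot in accordance with the index conventions of Lemma \ref{lemma:partialxi} and of the statement being proved. Applying this to the inductive expression for $\xi^{(k-1)}$ gives
\begin{equation*}
\xi^{(k)}=\sum_{\lambda\in\mathcal P_1^+(k-1)}\epsilon(\lambda)\left(\frac{\partial}{\partial x^{\mu_k}}\xi_{\lambda(\mu_1\dots\mu_{k-1})}\right)dx^{\mu_1}\otimes\dots\otimes dx^{\mu_k}.
\end{equation*}

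I would then invoke Lemma \ref{lemma:partialxi} to replace each partial derivative by $\sum_{s=1}^{l}\left(\xi_{\lambda_{[s]}^+(\mu_1\dots\mu_k)}-\xi_{\lambda_{[s]}^-(\mu_1\dots\mu_k)}\right)$, and use the sign recurrence $\epsilon(\lambda_{[s]}^+)=\epsilon(\lambda)$, $\epsilon(\lambda_{[s]}^-)=-\epsilon(\lambda)$ (well-defined thanks to the preceding uniqueness lemma for $\mathcal P_1^+$) to absorb the signs, so that each term $\epsilon(\lambda)\left(\xi_{\lambda_{[s]}^+}-\xi_{\lambda_{[s]}^-}\right)$ becomes $\epsilon(\lambda_{[s]}^+)\xi_{\lambda_{[s]}^+}+\epsilon(\lambda_{[s]}^-)\xi_{\lambda_{[s]}^-}$. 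Finally, the resulting double sum over $(\lambda,s)$, carrying both the $+$ and $-$ contributions, reorganizes into a single sum over $\mathcal P_1^+(k)$ by Proposition \ref{prop:disjointunionpartly}, which states precisely that $\mathcal P_1^+(k)$ is the disjoint union of the $\{\lambda_{[s]}^+\}$ and $\{\lambda_{[s]}^-\}$. This delivers $\xi^{(k)}=\sum_{\lambda\in\mathcal P_1^+(k)}\epsilon(\lambda)\,\xi_{\lambda(\mu_1\dots\mu_k)}\,dx^{\mu_1}\otimes\dots\otimes dx^{\mu_k}$, closing the induction.

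I expect the only real difficulty to be bookkeeping rather than analysis: one must match the ordering conventions so that the differentiation index lands in the correct (last) tensor slot, and verify that the three ingredients—the derivative formula of Lemma \ref{lemma:partialxi}, the recursive sign $\epsilon$, and the disjoint-union decomposition of Proposition \ref{prop:disjointunionpartly}—are indexed compatibly, so that the $+$/$-$ pair produced by each $(\lambda,s)$ corresponds exactly to the two families building up $\mathcal P_1^+(k)$. Once this correspondence is set up, the computation is forced and no further input is needed.
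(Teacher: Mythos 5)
Your proposal is correct and follows essentially the same route as the paper's own proof: induction on $k$ with the same base case, then differentiating the components under the flat connection, applying Lemma \ref{lemma:partialxi}, absorbing the signs via the recurrence for $\epsilon$, and regrouping the double sum using the disjoint-union decomposition of Proposition \ref{prop:disjointunionpartly}. No gaps.
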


\begin{proof}
Again, we show the result by induction in $k$. For $k=1$, we have
\begin{equation*}
\xi^{(1)}(x)=(dg)_x g(x)^{-1}=(\partial g/\partial x^\mu)(x)g(x)^{-1}dx^\mu=\xi_\mu(x)dx^\mu.    
\end{equation*}
Note that this expression agrees with \eqref{eq:componentsJk} since $\mathcal P_1^+(1)=\{(\{1\})\}$ and $\epsilon(\{1\})=1$.

To conclude, given $k>1$, we assume that the result holds for $k-1$. Hence,
\begin{eqnarray*}
\xi^{(k)}(x)=\left(d\xi^{(k-1)}\right)_x=d\left(\sum_{\lambda\in\mathcal P_1^+(k-1)}\epsilon(\lambda)\xi_{\lambda(\mu_1\dots\mu_{k-1})}dx^{\mu_1}\otimes\dots\otimes dx^{\mu_{k-1}}\right)_x\\
=\sum_{\lambda\in\mathcal P_1^+(k-1)}\epsilon(\lambda)\left(\frac{\partial}{\partial x^{\mu_k}}\xi_{\lambda(\mu_1\dots\mu_{k-1})}\right)(x) dx^{\mu_1}\otimes\dots\otimes dx^{\mu_k}\\
=\sum_{\lambda\in\mathcal P_1^+(k-1)}\sum_{s=1}^l\epsilon(\lambda)\left(\xi_{\lambda_{[s]}^+(\mu_1\dots\mu_k)}(x)-\xi_{\lambda_{[s]}^-(\mu_1\dots\mu_k)}(x)\right)dx^{\mu_1}\otimes\dots\otimes dx^{\mu_k}\\
=\sum_{\lambda\in\mathcal P_1^+(k)}\epsilon(\lambda)\xi_{\lambda(\mu_1\dots\mu_k)}(x)dx^{\mu_1}\otimes\dots\otimes dx^{\mu_k},
\end{eqnarray*}
where we have used Proposition \ref{prop:disjointunionpartly} and Lemma \ref{lemma:partialxi}.
\end{proof}

\begin{remark}
As a straightforward consequence of this theorem, we deduce that the maps $\xi_{\mu_1\dots\mu_j}$ take values in the universal enveloping algebra of $\mathfrak g$. More specifically, for each $j\in\mathbb Z^+$, $\xi_{\mu_1\dots\mu_j}:X\to U_j(\mathfrak g)$, where we define
\begin{equation*}
U_j(\mathfrak g)=\mathfrak g+\textrm{\normalfont span}\left\{B_{i_1}\dots B_{i_j}\mid(i_1,\dots,i_j)\in\mathcal S_j\right\}\subset U(\mathfrak g),
\end{equation*}
the sum being the one of vector subspaces. Note that the sum becomes direct sum, $\oplus$, for $j>1$, whereas $U_1(\mathfrak g) = \mathfrak g + \mathfrak g = \mathfrak g$. In any case, despite expression \eqref{eq:componentsJk}, the elements $\xi^{(j)}$ lie in $\mathfrak{g}$ by construction.

\end{remark}

Lastly, we use the previous theorem to compute the image of the map \eqref{eq:injectionhigherorderjet} for $k=2$. We denote by
\begin{equation*}
\textrm{\normalfont Sym}:\bigotimes^k T^*X\to\bigvee^k T^*X,\qquad \textrm{\normalfont Skew}:\bigotimes^k T^*X\to\bigwedge^k T^*X    
\end{equation*}
the symmetrization and skew-symmetrization maps, respectively. Recall that for $k=2$ these maps induce the decomposition $T^*X\otimes T^*X=(T^*X\vee T^*X)\oplus(T^*X\wedge T^*X)$. Similarly, for each $\xi_1,\xi_2\in U(\mathfrak g)$ we denote by $[\xi_1,\xi_2]=\xi_1\xi_2-\xi_2\xi_1$ the commutator, and by $\{\xi_1,\xi_2\}=\xi_1\xi_2+\xi_2\xi_1$ the anti-commutator.

\begin{corollary}
We have that 
\begin{equation*}
J^2(X,G)\simeq G\times V_2\subset G\times(T^*X\otimes\mathfrak g)\oplus(T^*X\otimes T^*X\otimes\mathfrak g),
\end{equation*}
where $V_2$ is the affine sub-bundle defined as
\begin{equation*}
V_2=\bigg\{(\xi_x,\alpha_x)\in(T^*X\otimes\mathfrak g)\oplus(T^*X\otimes T^*X\otimes\mathfrak g)\mid
\textrm{\normalfont Skew}(\alpha_x)=-\frac{1}{2}[\xi_x,\xi_x]\bigg\}.
\end{equation*}
In particular, $\textrm{\normalfont Sym}(\alpha_x)=\xi_{\mu\nu}(x)dx^\mu\otimes dx^\nu-\frac{1}{2}\{\xi_x,\xi_x\}.$ 
\end{corollary}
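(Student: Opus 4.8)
The plan is to read off the local form of $\xi^{(2)}$ from Theorem \ref{theorem:explicitxi} and then split it into its symmetric and skew-symmetric parts. First I would specialise \eqref{eq:componentsJk} to $k=2$. Since $\mathcal P_1^+(2)=\{(\{1,2\}),(\{1\},\{2\})\}$, and the two partitions arise from the unique element of $\mathcal P_1^+(1)$ via the operations $(\cdot)_{[1]}^+$ and $(\cdot)_{[1]}^-$ (Proposition \ref{prop:disjointunionpartly}), the recurrence for $\epsilon$ gives $\epsilon(\{1,2\})=1$ and $\epsilon((\{1\},\{2\}))=-1$. Hence, in a chart with the flat connection,
\[
\xi^{(1)}(x)=\xi_\mu(x)\,dx^\mu=:\xi_x,\qquad \xi^{(2)}(x)=\big(\xi_{\mu\nu}(x)-\xi_\mu(x)\xi_\nu(x)\big)\,dx^\mu\otimes dx^\nu=:\alpha_x .
\]
Throughout I would read $[\xi_x,\xi_x]$ and $\{\xi_x,\xi_x\}$ as the tensors $[\xi_\mu,\xi_\nu]\,dx^\mu\otimes dx^\nu$ and $\{\xi_\mu,\xi_\nu\}\,dx^\mu\otimes dx^\nu$, respectively.

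Next I would exploit that $\xi_{\mu\nu}=\xi_{\nu\mu}$, since second partial derivatives commute. Consequently the term $\xi_{\mu\nu}\,dx^\mu\otimes dx^\nu$ contributes nothing to $\textrm{\normalfont Skew}(\alpha_x)$, and antisymmetrising the remaining term yields $\textrm{\normalfont Skew}(\alpha_x)=-\tfrac12(\xi_\mu\xi_\nu-\xi_\nu\xi_\mu)\,dx^\mu\otimes dx^\nu=-\tfrac12[\xi_x,\xi_x]$, which is exactly the relation defining $V_2$; this already shows that the image of \eqref{eq:injectionhigherorderjet} is contained in $G\times V_2$. Symmetrising the same expression, and using $\xi_{\mu\nu}=\xi_{\nu\mu}$ once more, gives $\textrm{\normalfont Sym}(\alpha_x)=\xi_{\mu\nu}\,dx^\mu\otimes dx^\nu-\tfrac12\{\xi_x,\xi_x\}$, the asserted ``in particular'' identity.

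It then remains to prove the reverse inclusion, i.e. that every point of $G\times V_2$ is attained. For this I would exhibit, for arbitrary $g_0\in G$, $\xi_\mu\in\mathfrak g$ and symmetric $c_{\mu\nu}=c_{\nu\mu}\in\mathfrak g$, the $G$-valued map
\[
g(y)=\exp\!\Big(\xi_\mu(y^\mu-x^\mu)+\tfrac12 c_{\mu\nu}(y^\mu-x^\mu)(y^\nu-x^\nu)\Big)\,g_0 ,
\]
which is legitimate because we work with a matrix group and the argument of $\exp$ is $\mathfrak g$-valued. A short Taylor expansion of $\exp$ at $y=x$ shows that the image of $j_x^2 g$ under \eqref{eq:injectionhigherorderjet} has first-order part $\xi_x=\xi_\mu\,dx^\mu$ and second-order part with $\textrm{\normalfont Sym}(\alpha_x)=c_{\mu\nu}\,dx^\mu\otimes dx^\nu$ and $\textrm{\normalfont Skew}(\alpha_x)=-\tfrac12[\xi_x,\xi_x]$. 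As $g_0$, $\xi_\mu$ and the symmetric tensor $c_{\mu\nu}$ vary freely, the pair $(\xi_x,\alpha_x)$ sweeps out all of $V_2$; together with the injectivity of \eqref{eq:injectionhigherorderjet} established earlier, this identifies $J^2(X,G)$ with $G\times V_2$.

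The part I expect to require the most care is precisely this surjectivity step: one must verify that the skew-symmetric component of $\alpha_x$ is rigidly forced by $\xi_x$ (so no freedom is lost there) while the symmetric component is genuinely arbitrary, and that the realising map can be kept inside $G$ rather than merely inside $GL(N,\mathbb R)$, which is exactly what the exponential ansatz secures. A reassuring consistency check is the dimension count: the constraint $\textrm{\normalfont Skew}(\alpha_x)=-\tfrac12[\xi_x,\xi_x]$ removes precisely the $\binom{n}{2}\dim\mathfrak g$ skew degrees of freedom, leaving $\dim G+n\dim\mathfrak g+\binom{n+1}{2}\dim\mathfrak g$, which matches $\dim J_x^2(X,G)$.
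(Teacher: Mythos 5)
Your proposal is correct and takes essentially the same route as the paper: specialise Theorem \ref{theorem:explicitxi} to $k=2$, obtain $\xi^{(2)}=(\xi_{\mu\nu}-\xi_\mu\xi_\nu)\,dx^\mu\otimes dx^\nu$, and use the symmetry $\xi_{\mu\nu}=\xi_{\nu\mu}$ to read off the skew part $-\tfrac12[\xi_x,\xi_x]$ and the symmetric part $\xi_{\mu\nu}\,dx^\mu\otimes dx^\nu-\tfrac12\{\xi_x,\xi_x\}$. The only difference is that you make explicit, via the ansatz $g(y)=\exp\bigl(\xi_\mu(y^\mu-x^\mu)+\tfrac12 c_{\mu\nu}(y^\mu-x^\mu)(y^\nu-x^\nu)\bigr)g_0$, the surjectivity onto $V_2$ that the paper treats as clear; your computation $\xi_{\mu\nu}(x)=c_{\mu\nu}+\tfrac12\{\xi_\mu,\xi_\nu\}$, hence $\textrm{\normalfont Sym}(\alpha_x)=c_{\mu\nu}\,dx^\mu\otimes dx^\nu$ with $c_{\mu\nu}\in\mathfrak g$ arbitrary symmetric, is sound and tightens that step.
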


\begin{proof}
Since the map \eqref{eq:injectionhigherorderjet} is an injective morphism, it is enough to show that its image is, precisely, $G\times\left((T^*X\otimes\mathfrak g)\oplus V_2\right)$. Thank to the previous theorem, we know that \eqref{eq:injectionhigherorderjet} is given by
\begin{equation*}
j_x^2 g\mapsto\left(g(x),\xi^{(1)}(x)=\xi_\mu(x) dx^\mu,\xi^{(2)}(x)=(\xi_{\mu\nu}(x)-\xi_\mu(x)\xi_\nu(x))dx^\mu\otimes dx^\nu\right),
\end{equation*}
where $\xi_\mu(x)=(\partial g/\partial x^\mu)(x)g(x)^{-1}$ and $\xi_{\mu\nu}(x)=(\partial^2 g/\partial x^\mu\partial x^\nu)(x)g(x)^{-1}$. It is clear that the first and second components of the map are surjective onto $G$ and $T^*X\otimes\mathfrak g$, respectively. 
Additionally, it is easy to check that
\begin{equation*}
\textrm{\normalfont Skew}\left(\xi^{(2)}(x)\right)=-\frac{1}{2}\left[\xi^{(1)}(x),\xi^{(1)}(x)\right],
\end{equation*}
Analogously,
\begin{equation*}
\textrm{\normalfont Sym}\left(\xi^{(2)}(x)\right)=\xi_{\mu\nu}(x)dx^\mu\otimes dx^\nu-\frac{1}{2}\left\{\xi^{(1)}(x),\xi^{(1)}(x)\right\},
\end{equation*}
and we conclude.
\end{proof}

\section*{Acknowledgments}

MCL and ARA have been partially supported by Ministerio de Ciencia e Innovaci\'on (Spain), under grants PGC2018-098321-B-I00 and PID2021-126124NB-I00.  

ARA has been supported by a FPU grant from Ministerio de Universidades (Spain).

\bibliographystyle{plainnat}
\bibliography{biblio.bib}

\end{document}